\theoremstyle{plain}
\newtheorem{thm}{Theorem}[section]
\newtheorem{lem}[thm]{Lemma}
\newtheorem{prop}[thm]{Proposition}
\newtheorem{cor}[thm]{Corollary}
\theoremstyle{definition}
\newtheorem{defi}[thm]{Definition}
\theoremstyle{remark}
\newtheorem{eg}[thm]{Example}
\newtheorem{rmk}[thm]{Remark}
\def\C{{\mathbb C}}
\def\R{{\mathbb R}}
\def\Q{{\mathbb Q}}
\def\P{{\mathbb P}}
\def\AA{\mathcal{A}}
\def\O{\mathcal{O}}
\def\LL{\mathcal{L}}
\def\M{\mathcal{M}}
\def\J{\mathcal{J}}
\def\I{\mathcal{I}}
\def\d{\delta}
\def\e{\eta}
\def\m{\mu}
\def\om{\omega}
\def\x{\xi}
\def\L{\Lambda}
\def\S{\Sigma}
\def\Om{\Omega}
\def\o{\circ}
\def\ov{\overline}
\def\lrd{\lfloor}
\def\rrd{\rfloor}
\def\lru{\lceil}
\def\rru{\rceil}
\def\.{\cdot}
\def\({\Big{(}}
\def\){\Big{)}}
\def\^{\widehat}
\def\~{\widetilde}
\renewcommand{\and}{ \quad \text{and} \quad }
\renewcommand{\for}{ \quad \text{for} \ \, }
\DeclareMathOperator{\codim} {codim}
\DeclareMathOperator{\rk} {rk}
\DeclareMathOperator{\Hom}  {Hom}
\DeclareMathOperator{\Proj} {Proj}
\DeclareMathOperator{\reg} {reg}
\DeclareMathOperator{\Sing} {Sing}
\DeclareMathOperator{\Bl} {Bl}
\DeclareMathOperator{\val} {val}
\DeclareMathOperator{\mult} {mult}
\DeclareMathOperator{\ord} {ord}
\DeclareMathOperator{\Span} {Span}
\DeclareMathOperator{\Jac} {Jac}
\DeclareMathOperator{\mld} {mld}
\title{A vanishing theorem for log canonical pairs}
\author{Tommaso de Fernex}
\address{Department of Mathematics, University of Utah, 155 South 1400 East,
Salt Lake City, UT 48112-0090, USA}
\email{defernex@math.utah.edu}
\author{Lawrence Ein}
\address{Department of Mathematics,
University of Illinois at Chicago,
851 South Morgan Street,
Chicago, IL 60607-7045, USA}
\email{ein@math.uic.edu}
\thanks{The first author was partially supported by
NSF under Grant DMS-0548325.
The second author was partially supported by NSF
under Grant DMS-0700774}
\subjclass[2000]{Primary 14J17; Secondary 14C20, 14J17, 14M99}
\keywords{Vanishing theorem, log canonical
singularities, inversion of adjunction, Castelnuovo--Mumford
regularity.}
\begin{document}

\baselineskip 13pt

\begin{abstract}
Using inversion of adjunction, we deduce from Nadel's theorem
a vanishing property for ideals sheaves on
projective varieties, a special case of which recovers
a result due to Bertram--Ein--Lazarsfeld.
This enables us to generalize to a large class of projective schemes
certain bounds on Castelnuovo--Mumford regularity previously obtained
by Bertram--Ein--Lazarsfeld in the smooth case and by Chardin--Ulrich for
locally complete intersection varieties with rational singularities.
Our results are tested on several examples.
\end{abstract}

\maketitle

\section{Introduction}

The following vanishing property and its implications to questions related to
Castelnuovo--Mumford regularity are the main goal of the paper.

\begin{thm}\label{thm:vanishing}
Let $X$ be a locally complete intersection projective variety
with rational singularities, and let
$V \subset X$ be a pure-dimensional proper subscheme with no embedded components.
Suppose that $V$ is scheme-theoretically given by
$$
V = H_1 \cap \dots \cap H_t
$$
for some divisors $H_i \in |\LL^{\otimes d_i}|$, where $\LL$ is a
globally generated line bundle on $X$ and $d_1 \ge \dots \ge d_t$.
If the pair $(X,eV)$ is log canonical where $e = \codim_XV$,
and $\AA$ is an ample line bundle on $X$, then
$$
H^i(X,\om_X\otimes \LL^{\otimes k}\otimes\AA\otimes \I_V) = 0
\for i > 0, \ k \ge d_1 + \dots + d_e.
$$
\end{thm}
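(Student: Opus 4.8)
The plan is to compare $V$ with an auxiliary complete intersection of the prescribed degrees, thereby reducing the asserted vanishing to Kawamata--Viehweg vanishing on $X$ together with a vanishing on a linked subscheme that is controlled by the log canonical hypothesis. Recall first that a locally complete intersection with rational singularities is Gorenstein with canonical singularities; in particular $\om_X$ is a line bundle, $X$ is Cohen--Macaulay, and Kawamata--Viehweg vanishing holds on $X$, so that all the cohomological tools below are available.

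First I would choose general divisors $G_j \in |\LL^{\otimes d_j}\otimes\I_V|$ for $j=1,\dots,e$, with defining sections $g_j \in H^0(\LL^{\otimes d_j})$, and set $W = G_1 \cap \dots \cap G_e$. Since the $G_j$ are general and $\LL$ is globally generated, $W$ is a complete intersection of codimension $e$ containing $V$; being cut out by a regular sequence inside the locally complete intersection $X$, it is Gorenstein with $\om_W \cong \om_X\otimes\LL^{\otimes(d_1+\dots+d_e)}|_W$. The ideal $\I_W$ has the Koszul resolution determined by $g_1,\dots,g_e$. Twisting that resolution by $\om_X\otimes\LL^{\otimes k}\otimes\AA$ and running the hypercohomology spectral sequence, every term has the shape $\om_X\otimes\LL^{\otimes(k-d_{j_1}-\dots-d_{j_p})}\otimes\AA$ with $d_{j_1}+\dots+d_{j_p}\le d_1+\dots+d_e$, hence has no higher cohomology as soon as $\LL^{\otimes(k-d_1-\dots-d_e)}\otimes\AA$ is nef and big. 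Because $\LL$ is nef and $\AA$ is nef and big, this holds for $k\ge d_1+\dots+d_e$, giving $H^i(X,\om_X\otimes\LL^{\otimes k}\otimes\AA\otimes\I_W)=0$ for $i>0$ in that range. This is where the exact bound $d_1+\dots+d_e$ is produced.

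Next I would pass from $W$ to $V$ by liaison. Because $V$ is pure-dimensional of codimension $e$ with no embedded components and $W$ is a Gorenstein complete intersection containing it, the residual scheme $V'$ with $\I_{V'}=(\I_W:\I_V)$ links $V$ to $V'$, and the fundamental isomorphism of liaison theory gives $\I_V/\I_W\cong\om_{V'}\otimes\om_W^{-1}$ as $\O_{V'}$-modules. Substituting $\om_W\cong\om_X\otimes\LL^{\otimes(d_1+\dots+d_e)}|_W$, the sequence $0\to\I_W\to\I_V\to\I_V/\I_W\to 0$ twisted by $\om_X\otimes\LL^{\otimes k}\otimes\AA$ identifies the quotient term with $\om_{V'}\otimes N|_{V'}$, where $N=\LL^{\otimes(k-d_1-\dots-d_e)}\otimes\AA$. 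Combined with the vanishing for $\I_W$ above, the long exact sequence reduces the theorem to proving $H^i(V',\om_{V'}\otimes N|_{V'})=0$ for $i>0$.

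The heart of the argument, and the step I expect to be the main obstacle, is this vanishing on the linked scheme $V'$. Here the hypothesis that $(X,eV)$ is log canonical enters through inversion of adjunction: since $e=\codim_X V$, it forces the singularities of $V$, and hence of the scheme $V'$ produced by linkage, to be mild (Du Bois, via the Kollár--Kovács circle of results), which is exactly what is needed to run the Du Bois/Kodaira-type vanishing $H^i(V',\om_{V'}\otimes N|_{V'})=0$ for $N|_{V'}$ nef and big. Two points require care. First, one must check that $N|_{V'}$ is big and not merely nef; for $k\ge d_1+\dots+d_e$ the factor $\LL^{\otimes(k-d_1-\dots-d_e)}$ is nef, so the issue is the bigness of $\AA|_{V'}$, which I expect to hold for the general linking complete intersection because $\AA$ is big on $X$. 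Second, and more essentially, one must show that the log canonical condition is inherited by $V'$ in the correct form; this is precisely where inversion of adjunction converts the global statement that $(X,eV)$ is log canonical into local control of the singularities of $V'$ along $V\cap V'$. I anticipate this inheritance to be the crux, and it is presumably cleanest to phrase the whole comparison through adjoint ideals on $X$, so that a single application of Nadel vanishing replaces the two separate appeals to Kawamata--Viehweg and keeps every cohomology computation on $X$ itself.
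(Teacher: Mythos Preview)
Your approach via liaison is genuinely different from the paper's and contains a real gap. The paper never passes to a linked scheme $V'$; instead it realizes $\I_V$ itself (up to a component disjoint from $V$) as a multiplier ideal on $X$ and applies Nadel vanishing once. Concretely, letting $B$ be the base scheme of $|\LL^{\otimes(d_1+\dots+d_e)}\otimes\I_V^e|$, one shows via inversion of adjunction (Proposition~\ref{lem:mld1}) that $(X,B)$ is log canonical in a neighborhood of $V$, and then that for $0<\d\ll 1$ the multiplier ideal $\J(X,(1-\d)B+\d e V)$ equals $\I_{V\cup W}$ with $W$ disjoint from $V$. Nadel vanishing for this ideal gives the result after a short splitting argument (Lemma~\ref{lem:2}). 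No property of any residual scheme is ever invoked, and the nef-and-big line bundle $\AA$ stays on $X$ throughout.

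By contrast, your argument hinges on the vanishing $H^i(V',\om_{V'}\otimes N|_{V'})=0$, and here two steps are not justified. First, the implication ``$(X,eV)$ log canonical $\Rightarrow$ $V'$ Du Bois'': inversion of adjunction together with the Koll\'ar--Kov\'acs theorem does show that $V$, being an lc center, is Du Bois, but the word ``hence'' in your passage from $V$ to $V'$ hides the whole difficulty. Du Bois singularities are not known to be preserved under algebraic linkage, and nothing in the hypothesis on $(X,eV)$ controls the singularities of the \emph{residual} scheme $V'$. The liaison proof in \cite{CU} works precisely because the stronger hypothesis there---$V$ locally complete intersection with rational singularities---does transfer usable information across the link; the log canonical hypothesis does not. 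Second, $\AA$ big on $X$ does not make $\AA|_{V'}$ big, and genericity of the linking complete intersection cannot repair this, so even granting a Du Bois-type vanishing on $V'$ you would only obtain the theorem for $\AA$ ample. Your closing remark about phrasing everything through adjoint/multiplier ideals on $X$ is exactly the right instinct---that is what the paper actually does---but it replaces the liaison argument rather than completing it.
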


In the special case when both $X$ and $V$ are assumed to be smooth
this result was proven in \cite{BEL}
using vanishing theorems on certain residual intersections on the
blow up of $X$ along $V$.

The proof of Theorem~\ref{thm:vanishing} follows a novelle
approach, based on a result on inversion of adjunction from \cite{EM1}.
The key idea is to produce a suitable $\Q$-scheme $Z$ on $X$
whose multiplier ideal $\J(X,Z)$ coincides with $\I_V$ in a
neighborhood of $V$. We do this by taking
$Z = (1 - \d)B + \d e V$, where $B$ is the base scheme of
$|\LL^{\otimes d_1 + \dots + d_e} \otimes \I_V^e|$ and
$0 < \d \ll 1$. We reduce in this way to
a setting in which one can directly apply Nadel's vanishing theorem on $X$,
without having to blow up.
Our argument, once restricted to the case of smooth varieties,
gives in particular a very simple proof of the main result in \cite{BEL}.
We remark that there are other $\Q$-schemes
$Z$ such that $\J(X,Z)$ coincides with $\I_V$ near $V$ (e.g., $Z = eV$),
but the vanishing properties one can deduce from them are in general
not as strong (cf. Remark~\ref{rmk:d_i-equal}).

Vanishing theorems of the above kind
are motivated by questions concerning Castelnuovo--Mumford regularity.
We recall that a proper closed subscheme $V$ of $\P^n$ is said to be {\it $m$-regular}
(in the sense of Castelnuovo--Mumford) if
\begin{equation}\label{eq:regularity}
H^i(X,\I_V(m-i)) = 0 \for i > 0.
\end{equation}
The {\it regularity} of $V$ is the minimum
value of $m$ for which \eqref{eq:regularity}
holds, and is denoted by $\reg(V)$.
The regularity of a scheme is related to the complexity of the
associated ideal sheaf (and its sygyzies)
and, apart of a few special cases, is in general difficult to compute.
For more on the notion of regularity, we refer to \cite[Section~1.8]{Laz-I}.

When $V$ is a smooth projective variety, bounds on regularity
were determined in terms of the degrees of $V$ or of its
defining equations. It is expected that $\reg(V) \le \deg V - e + 1$
for any non-degenerate variety $V \subset \P^n$ of codimension $e$.
The curve case, originally studied by Castelnuovo \cite{Cas}, was
completely settled (also for possibly singular curves) in \cite{GLP},
and the bound has been established for smooth surfaces in \cite{Pin,Laz}
and smooth threefolds in \cite{Ran}; see also \cite{Kwa}.

The results of \cite{BEL} showed that in general
potentially stronger bounds can be determined in terms of the
degrees of a set of equations defining $V$, rather than the degree of $V$.
It is convenient to introduce the notation
$$
d(V) := \min \{d_1 + \dots + d_e \mid
\text{$V$ is cut out by forms of degrees $d_1 \ge \dots \ge d_t$}\},
$$
where $e = \codim_{\P^n}(V)$. Then the result in \cite{BEL} says
that if $V \subset \P^n$ is a smooth variety of codimension $e$, then
$$
\reg(V) \le d(V) - e+1,
$$
and equality holds if and only if $V$ is a complete intersection in $\P^n$.

Here we address the same question allowing $V$ to a singular
variety or, more generally, a singular scheme in $\P^n$.
It was proven in \cite{CU} using liaison theory
that the above bound also holds
in the case $V$ is a locally complete intersection variety
with rational singularities.
By contrast, the regularity is in general sensitive
to the singularities, and there are in fact examples
of subschemes whose regularity grows doubly exponentially
in the generating degrees (cf. \cite{BS}).

In light of Theorem~\ref{thm:vanishing}, it becomes natural to
impose conditions within the context of singularities of pairs.
We first discuss the case in which $V$ is a variety,
where we can impose conditions directly on $V$.

If $V$ is a locally complete intersection variety, then we simply
assume that $V$ has log canonical singularities.
Since locally complete intersection varieties
with rational singularities have canonical (and hence log canonical) singularities,
this particular case of our result includes, essentially, the main result
in \cite{CU} (cf. Remarks~\ref{rmk:CU2} and~\ref{rmk:CU1}).

More generally, we consider varieties $V$ that are not necessarily
locally complete intersection.
We still require that $V$ is normal and $\Q$-Gorenstein.
In order to compare the singularities of $V$ with
those of the pair $(\P^n,eV)$, we rely on a more general
version of inversion of adjunction, proven in \cite{EM2,Kaw}.
For any positive integer $r$ such that $rK_V$ is Cartier,
there is a subscheme $\S_r \subset V$, only supported
on the locus where $V$ is not locally complete intersection,
which measures---so to speak---how far
$V$ is from being locally complete intersection
(see Definitions~\ref{def:lci-subscheme} and~\ref{def:lci-lc-sing} below);
one has that $V$ is locally complete intersection if and only if $\S_r = \emptyset$.
We therefore assume that the pair $(V,\frac 1r \S_r)$ has log canonical singularities.
This condition is independent of the choice of $r$;
if $V$ is locally complete intersection, then the condition is equivalent to $V$
being log canonical.

In the same spirit of \cite{BEL}, we obtain the following results.

\begin{cor}\label{thm:main-var-case}
Let $V \subset \P^n$ is a normal $\Q$-Gorenstein variety
of codimension $e$, and
suppose that the pair $(V,\frac 1r \S_r)$ has log canonical singularities,
where $\S_r$ is defined as above.
Then
$$
\reg(V) \le d(V) - e+1
$$
and equality holds if and only if $V$ is a complete intersection in $\P^n$.
\end{cor}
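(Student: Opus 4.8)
The plan is to derive the corollary from Theorem~\ref{thm:vanishing}, applied with ambient variety $X = \P^n$, together with the general inversion of adjunction of \cite{EM2,Kaw} and---for the equality statement---a liaison argument.

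First I would fix a presentation $V = H_1 \cap \dots \cap H_t$ by hypersurfaces of degrees $d_1 \ge \dots \ge d_t$ realizing $d(V) = d_1 + \dots + d_e$, and take $\LL = \O_{\P^n}(1)$ (globally generated) and $\AA = \O_{\P^n}(1)$ (ample, hence nef and big). Being a variety, $V$ is pure-dimensional with no embedded components, so the only hypothesis of Theorem~\ref{thm:vanishing} that is not immediate is that $(\P^n, eV)$ is log canonical. This is exactly where the assumption on $V$ is used: by the general inversion of adjunction of \cite{EM2,Kaw}, the hypothesis that $(V,\tfrac1r \S_r)$ is log canonical is equivalent to $(\P^n, eV)$ being log canonical. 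With $\om_{\P^n} = \O_{\P^n}(-n-1)$, Theorem~\ref{thm:vanishing} then yields
$$
H^i(\P^n, \I_V(j)) = 0 \for i > 0, \ j \ge d(V) - n.
$$

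I would then convert this uniform vanishing into the regularity bound. Put $m = d(V) - e + 1$. For $1 \le i \le \dim V = n - e$ the twist $m - i$ is at least $d(V) - n$, so the displayed vanishing applies; for $i = \dim V + 1$ the twist is exactly $d(V) - n$, again covered; and for $i > \dim V + 1$ the sequence $0 \to \I_V \to \O_{\P^n} \to \O_V \to 0$ gives $H^i(\I_V(m-i)) \cong H^i(\O_{\P^n}(m-i))$, which vanishes because $d(V) \ge e$. Hence $V$ is $m$-regular, so $\reg(V) \le d(V) - e + 1$.

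The equality statement is the main point. Running the same two reductions at the twist $m - 1 = d(V) - e$ shows that all the relevant groups vanish except possibly one, so $V$ fails to be $(d(V)-e)$-regular if and only if $H^{n-e+1}(\I_V(d(V)-n-1)) \ne 0$; assuming $e \ge 2$ and $\dim V \ge 1$ (the cases $e = 1$ and $\dim V = 0$, where $V$ is automatically a complete intersection, are checked directly), the structure sequence identifies this group with $H^{\dim V}(\O_V(d(V)-n-1))$, which by duality on $V$ is dual to $H^0(V, \om_V(n+1-d(V)))$. Thus $\reg(V) = d(V) - e + 1$ precisely when $\om_V(n+1-d(V))$ has a nonzero section. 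If $V$ is a complete intersection then $\om_V = \O_V(d(V)-n-1)$ by adjunction, the section is the constant $1$, and equality holds. For the converse I would choose, as in \cite{CU}, a complete intersection $W \supseteq V$ cut out by general forms of degrees $d_1, \dots, d_e$, and let $V'$ be the scheme linked to $V$ in $W$; since $\om_W = \O_W(d(V)-n-1)$ and $V$ is pure of dimension $\dim W$, liaison gives $\om_V \cong \I_{V'/W}(d(V)-n-1)$, whence $H^0(V, \om_V(n+1-d(V))) \cong H^0(W, \I_{V'/W})$. As $W$ is connected, the right-hand side is nonzero exactly when $V' = \emptyset$, that is, when $V = W$ is a complete intersection. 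The delicate steps---and the ones I expect to carry the real content---are the existence of a linking complete intersection of the prescribed degrees and the liaison formula for $\om_V$, together with the verification that $\om_V$ is the dualizing sheaf computing the top cohomology; the normality and $\Q$-Gorenstein hypotheses enter precisely to license these identifications and the preceding inversion of adjunction.
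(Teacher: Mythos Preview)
Your proposal is correct and follows essentially the same route as the paper: the paper deduces Corollary~\ref{thm:main-var-case} from Corollary~\ref{thm:main-gen-case} via the inversion of adjunction of \cite{EM2,Kaw}, and your argument simply unpacks the proof of Corollary~\ref{thm:main-gen-case} (vanishing from Theorem~\ref{thm:vanishing} plus the liaison/duality analysis of the borderline group $H^{n-e+1}(\I_V(d(V)-n-1))$). The only cosmetic difference is that the paper records the liaison isomorphism as $\om_V \cong \I_{R\cap V/V}(d(V)-n-1)$ rather than your $\I_{V'/W}(d(V)-n-1)$, but these amount to the same computation of $H^0$.
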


\begin{cor}\label{cor:proj-normal-CM}
With the same assumptions as in Corollary~\ref{thm:main-var-case},
if $d(V) \le n+1$, then
$V$ is projectively normal. Moreover, if $V$ has rational singularities
and $d(V) \le n$, then $V$ is projectively
Cohen--Macaulay.
\end{cor}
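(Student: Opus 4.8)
The plan is to read off both conclusions from a single cohomology vanishing. Applying the vanishing that proves Corollary~\ref{thm:main-var-case} with the nef and big line bundle $\AA = \O_{\P^n}(1)$, its conclusion $H^i(\om_{\P^n} \otimes \O_{\P^n}(m) \otimes \AA \otimes \I_V) = 0$ for $m \ge d_1 + \dots + d_e$ reads, after writing the twist as $k = m - n$ and optimizing the defining degrees,
$$
H^i\big(\P^n, \I_V(k)\big) = 0 \fall i > 0 \and k \ge d(V) - n.
$$
This is sharper than the regularity bound of Corollary~\ref{thm:main-var-case}, since it controls \emph{all} higher cohomology of $\I_V$ at once in the wide range $k \ge d(V) - n$.

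Suppose first that $d(V) \le n+1$, so the displayed vanishing gives $H^1(\P^n, \I_V(k)) = 0$ for all $k \ge 1$. For $k \le 0$ this holds automatically: the restriction $H^0(\O_{\P^n}(k)) \to H^0(\O_V(k))$ is an isomorphism when $k = 0$ (both sides are the constants, as $V$ is an integral projective variety) and both groups vanish when $k < 0$ (a negative power of the very ample $\O_V(1)$ has no sections on the integral variety $V$). Hence $H^1(\P^n, \I_V(k)) = 0$ for every $k$, that is, every restriction map $H^0(\O_{\P^n}(k)) \surj H^0(\O_V(k))$ is surjective; since $V$ is normal by hypothesis, this is precisely projective normality.

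Now assume in addition that $V$ has rational singularities and $d(V) \le n$, and set $d = \dim V = n-e$. Recall that $V$ is projectively Cohen--Macaulay precisely when $H^i(\P^n, \I_V(k)) = 0$ for all $k \in \Z$ and $1 \le i \le d$. Via the sequence $0 \to \I_V \to \O_{\P^n} \to \O_V \to 0$ and the vanishing of the intermediate cohomology of $\O_{\P^n}$, this amounts to projective normality (the case $i = 1$, already in hand since $d(V) \le n$) together with
$$
H^i\big(V, \O_V(k)\big) = 0 \fall 1 \le i \le d-1 \and k \in \Z.
$$
For $k \ge 0$ this is immediate from the displayed vanishing, which under $d(V) \le n$ covers every $k \ge 0$: indeed $H^i(\O_V(k)) \cong H^{i+1}(\I_V(k)) = 0$ for $1 \le i \le n-1$.

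The one remaining---and most delicate---point is the vanishing of the intermediate cohomology in negative degrees, and this is where rational singularities enter. Being a variety with rational singularities, $V$ is Cohen--Macaulay, hence carries a dualizing sheaf $\om_V$, and Serre duality gives $H^i(V,\O_V(k))^\vee \cong H^{d-i}(V, \om_V(-k))$. For $k < 0$ the line bundle $\O_V(-k)$ is ample, and the Kodaira-type vanishing available on varieties with rational singularities---proved by passing to a resolution $\pi \colon W \to V$, combining $\pi_*\om_W = \om_V$ with Grauert--Riemenschneider ($R^j\pi_*\om_W = 0$ for $j>0$), and then applying Kawamata--Viehweg vanishing to the nef and big bundle $\pi^*\O_V(-k)$ on the smooth $W$---yields $H^{d-i}(V, \om_V(-k)) = 0$ whenever $d-i \ge 1$, i.e. for $i \le d-1$. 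This settles the negative degrees and completes the proof that $V$ is projectively Cohen--Macaulay. I expect this last step to be the main obstacle: one needs genuine vanishing of middle cohomology for \emph{arbitrarily} negative twists, which no positivity of $\AA$ can supply and which forces the appeal to rational singularities through Grauert--Riemenschneider.
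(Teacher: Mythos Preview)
Your proof is correct and follows essentially the same route as the paper: deduce the full vanishing $H^i(\P^n,\I_V(k))=0$ for $i>0$ and $k\ge d(V)-n$ from Theorem~\ref{thm:vanishing}, handle projective normality directly, and for the projectively Cohen--Macaulay statement treat the negative twists via a resolution of $V$, rational singularities, Serre duality, and Kawamata--Viehweg. The only cosmetic difference is that the paper passes to the resolution first (using $R g_*\O_{V'}=\O_V$) and then applies Serre duality on the smooth model, whereas you apply Serre duality on $V$ (using that rational singularities are Cohen--Macaulay) and then push $\om_V$ up via $\pi_*\om_W=\om_V$ and Grauert--Riemenschneider; these are two orderings of the same computation.
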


An important feature of our method
is that it allows us to consider a much larger class of subschemes of $\P^n$
by lifting our conditions on singularities to the pair $(\P^n,eV)$,
as the corresponding notions on $V$ would not make sense
in such generality. We obtain in this way
the following result (which, by inversion of adjunction,
includes Corollary~\ref{thm:main-var-case} as a special case).

\begin{cor}\label{thm:main-gen-case}
Let $V \subset \P^n$ be a pure-dimensional subscheme with no embedded components,
and assume that the pair $(\P^n,eV)$ is log canonical, where $e = \codim_{\P^n}V$.
Then
$$
\reg(V) \le d(V) - e+1
$$
and equality holds if and only if $V$ is a complete intersection in $\P^n$.
\end{cor}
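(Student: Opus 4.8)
The plan is to apply Theorem~\ref{thm:vanishing} with $X = \P^n$, which is smooth and hence a locally complete intersection with (trivially) rational singularities, so that every hypothesis of the theorem is in force: the log canonicity of $(\P^n, eV)$ is assumed, $\LL = \O_{\P^n}(1)$ is globally generated, and we may fix defining forms $H_i$ of degrees $d_1 \ge \dots \ge d_t$ with $d_1 + \dots + d_e = d(V)$. Taking $\AA = \O_{\P^n}(a)$ for an arbitrary integer $a \ge 1$ (which is nef and big) and recalling $\om_{\P^n} = \O_{\P^n}(-n-1)$, the theorem yields $H^i(\P^n, \I_V(\ell)) = 0$ for all $i > 0$ and all $\ell \ge d(V) - n$. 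To read off the regularity, set $m = d(V) - e + 1$ and check that $H^i(\I_V(m - i)) = 0$ for every $i \ge 1$: for $1 \le i \le \dim V + 1$ the twist $m - i$ satisfies $m - i \ge d(V) - n$, so the vanishing above applies; for $i > \dim V + 1$ the group vanishes automatically, using $H^{i-1}(\O_V(m-i)) = 0$ by Grothendieck vanishing on $V$ together with the cohomology of $\O_{\P^n}$ (the only borderline case $i = n$ being covered because $m - n \ge -n$, as $d(V) \ge e$). Hence $V$ is $m$-regular and $\reg(V) \le d(V) - e + 1$.

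For the equality statement I would first translate it into a single cohomological condition. Running the same bookkeeping one degree lower shows that $V$ fails to be $(d(V) - e)$-regular precisely when the one group not controlled by Theorem~\ref{thm:vanishing}, namely $H^{\dim V + 1}(\I_V(d(V) - n - 1))$, is nonzero (for $e = 1$ the scheme $V$ is a hypersurface, hence automatically a complete intersection and there is nothing to prove, so assume $e \ge 2$). Via the ideal sequence this group is isomorphic to $H^{\dim V}(\O_V(d(V) - n - 1))$, and Serre--Grothendieck duality in top degree identifies its dual with $H^0(V, \om_V(n + 1 - d(V)))$, where $\om_V$ is the dualizing sheaf of $V$. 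Thus $\reg(V) = d(V) - e + 1$ if and only if $H^0(V, \om_V(n + 1 - d(V))) \ne 0$.

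If $V$ is a complete intersection of type $(d_1, \dots, d_e)$ with $d_1 + \dots + d_e = d(V)$, then adjunction gives $\om_V \cong \O_V(d(V) - n - 1)$, so the group in question is $H^0(\O_V) \ne 0$ and equality holds (equivalently, one computes $\reg(V)$ directly from the Koszul resolution). For the converse I would realize $V$ inside a complete intersection: choosing $e$ general forms $G_1, \dots, G_e \in \I_V$ of degrees $d_1, \dots, d_e$, a standard general-position argument produces a complete intersection $W = \{G_1 = \dots = G_e = 0\} \supseteq V$ of codimension $e$, so that $\om_W \cong \O_W(d(V) - n - 1)$ and $V$ is linked to a residual subscheme $V'$. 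Linkage then gives $\om_V \cong \mathcal{H}om_{\O_W}(\O_V, \om_W) \cong \I_{V'/W}(d(V) - n - 1)$, whence $\om_V(n+1 - d(V)) \cong \I_{V'/W}$. A nonzero section of this sheaf lies in $H^0(W, \O_W)$; since $W$ is a connected complete intersection of positive dimension this space is just the constants, and a nonzero constant cannot vanish along a nonempty $V'$. Therefore $V' = \emptyset$, which forces $\I_{V'/W} = \O_W$ and hence $V = W$, a complete intersection.

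The inequality is essentially immediate once Theorem~\ref{thm:vanishing} is in hand, so the main obstacle is the \emph{only if} direction of the equality. Its conceptual core is the linkage identification $\om_V(n+1-d(V)) \cong \I_{V'/W}$ together with the connectedness of $W$; the technical work lies in producing the complete intersection $W$ of the prescribed degrees (that is, verifying that $e$ general forms of degrees $d_1, \dots, d_e$ in $\I_V$ form a regular sequence) and in disposing of the degenerate cases --- the $0$-dimensional case, where $W$ may be disconnected, and the possibly non-reduced structure of $V$, where one must argue that $H^0(W, \O_W)$ contributes no nilpotent sections to $\I_{V'/W}$.
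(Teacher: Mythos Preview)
Your proof is correct and follows essentially the same route as the paper: the inequality via Theorem~\ref{thm:vanishing} specialized to $X=\P^n$ (the paper packages this as Corollary~\ref{thm:vanishing-P^n}), and the equality by reducing to the nonvanishing of $H^0(\om_V(n+1-d(V)))$ via duality and then invoking linkage through a general complete intersection $W\supseteq V$ of type $(d_1,\dots,d_e)$. The only cosmetic difference is the final step --- the paper identifies $\om_V(n+1-d)$ with $\I_{R\cap V/V}$ and argues that connectedness of $W$ forces the residual $R$ to meet every component of $V$, while you work with $\I_{V'/W}$ on $W$ and use $H^0(\O_W)=\C$; these are equivalent readings of the same linkage identity, and the edge cases you flag (the zero-dimensional case and the construction of $W$) are likewise not spelled out in the paper.
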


The hypotheses of this result are satisfied, for instance, by examples
arising from classical constructions, such as determinantal varieties
and generic projections. It is interesting to observe that these constructions
in general produce varieties that are not locally complete intersection
and, at least in the second case, not normal.
Moreover, a construction via residual intersections provides a family of
examples whose regularity is precisely one less the one expected
for complete intersections (a characterization is given in
Proposition~\ref{prop:CU1} below).
These examples are discussed in the last section of the paper.

A different proof of \cite{BEL} was given in \cite{Ber}
by applying Nadel vanishing theorem to a certain multiplier ideal on
the blow up of $X$ along $V$. In fact,
the same approach can be extended, without
much change, to the case (considered in \cite{CU})
in which $V$ is a locally complete
intersection variety with rational singularities.

What inversion of adjunction gives us is a way to go beyond
these conditions on singularities, as it allows us to avoid
altogether to work on the blow up of $X$ along $V$.
Moreover, once the deep result from inversion of adjunction is granted,
the approach followed in this paper gives a quite simple and clean
proof which applies, uniformly, to all cases.

The authors are grateful to Rob Lazarsfeld for bringing \cite{Ber} to
their attention, and to Victor Lozovanu for pointing out 
that the line bundle $\AA$ in Theorem~\ref{thm:vanishing}
needs to be ample, instead of just nef and big.

\section{Vanishing theorems for multiplier ideal sheaves}

Let $X$ be a variety with rational singularities,
and assume that its canonical class $K_X$ is Cartier.
Let $\om_X \cong \O_X(K_X)$ be the dualizing sheaf.

If $\LL$ is a nef and big line bundle on $X$, then
$$
H^i(X,\om_X \otimes\LL) = 0 \for i > 0
$$
by the Kawamata--Viehweg vanishing theorem.
Multiplier ideals arise naturally in this context
as a way to encode this property from different birational models of $X$.

Consider a formal linear combination
$$
Z = \sum_{j=1}^k c_j Z_j,
$$
where $Z_j \subset X$ are proper closed subschemes
defined by ideal sheaves $\I_{Z_j} \subseteq \O_X$,
and $c_j \in \R$. The {\it multiplier ideal sheaf}
$\J(X,Z)$ of the pair $(X,Z)$ is then defined by
$$
\J(X,Z) = f_*\O_X\big(\big\lru K_{Y/X} - \sum c_j f^{-1}(Z_j)
\big\rru\big) \subseteq \O_X,
$$
where $f \colon Y \to X$ is any log resolution of $(X,Z)$,
$K_{Y/X}$ is the relative canonical divisor of $f$,
and $f^{-1}(Z_j)$ is the subscheme of $Y$ defined by $\I_{Z_j}\.\O_Y$.
We recall that the definition of log resolution
requires that $f$ is a proper birational morphism
from a smooth variety $Y$ such that each $f^{-1}(Z_j)$ and the
exceptional locus of $f$ are divisors whose supports form, together,
a simple normal crossings divisor.

By applying Kawamata--Viehweg vanishing theorem on log resolutions,
one obtains the following vanishing theorem.

\begin{thm}[Nadel vanishing theorem]
With the above notation, suppose that $A_j$ and $M$ are Cartier divisors on $X$
such that $\O_X(A_j) \otimes \I_{Z_j}$ is globally generated for each $j$
and $M-\sum c_jA_j$ is nef and big.
Then
$$
H^i(X,\om_X \otimes \O_X(M) \otimes \J(X,Z)) = 0 \for i > 0.
$$
\end{thm}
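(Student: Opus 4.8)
The plan is to pull everything back to a log resolution $f\colon Y\to X$ of $(X,Z)$ and to deduce the statement from two applications of Kawamata--Viehweg vanishing on the smooth variety $Y$: a \emph{relative} one, to show that the higher direct images of the relevant line bundle vanish (so that cohomology upstairs and downstairs agree), and an \emph{absolute} one, to kill the cohomology on $Y$. This is the standard route from Kawamata--Viehweg to Nadel; the only genuine work is to convert the two hypotheses---global generation of $\O_X(A_j)\otimes\I_{Z_j}$ and nef-and-bigness of $M-\sum c_jA_j$---into the single positivity statement needed upstairs.

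Concretely, I would fix a log resolution $f\colon Y\to X$ and write $\I_{Z_j}\.\O_Y=\O_Y(-F_j)$, so that the $F_j$ and $\Ex(f)$ have simple normal crossings support. The global generation of $\O_X(A_j)\otimes\I_{Z_j}$ pulls back to global generation of $\O_Y(f^*A_j-F_j)$, so each $f^*A_j-F_j$ is nef; this is the only place the first hypothesis is used. Writing $K_{Y/X}=K_Y-f^*K_X$ and $P:=f^*M-\lfloor\sum c_jF_j\rfloor$, the projection formula identifies
$$\om_X\otimes\O_X(M)\otimes\J(X,Z)\;\cong\;f_*\O_Y(K_Y+P),$$
since $\J(X,Z)=f_*\O_Y(K_{Y/X}-\lfloor\sum c_jF_j\rfloor)$ and $K_{Y/X}+f^*K_X=K_Y$. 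The point of the definition of $P$ is that $P=\lceil N\rceil$ for the $\Q$-divisor
$$N:=f^*M-\textstyle\sum c_jF_j=f^*\big(M-\textstyle\sum c_jA_j\big)+\textstyle\sum_j c_j\,(f^*A_j-F_j),$$
whose fractional part is supported on a simple normal crossings divisor.

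With this in hand the two vanishings are immediate in principle. For the relative statement, $N$ is $f$-nef: $f^*M$ is $f$-trivial and each $c_j(f^*A_j-F_j)$ is $f$-nef, so relative Kawamata--Viehweg (equivalently, local vanishing for multiplier ideals, which is available here because $X$ has rational singularities and $K_X$ is Cartier) gives $R^qf_*\O_Y(K_Y+\lceil N\rceil)=0$ for $q>0$; the Leray spectral sequence then yields $H^i(X,f_*\O_Y(K_Y+P))\cong H^i(Y,\O_Y(K_Y+P))$. For the absolute statement, I would apply Kawamata--Viehweg vanishing on $Y$ to $K_Y+\lceil N\rceil$, concluding $H^i(Y,\O_Y(K_Y+P))=0$ for $i>0$. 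The main obstacle---and really the heart of the argument---is checking that $N$ is nef \emph{and big}: the term $f^*(M-\sum c_jA_j)$ is the pullback of a nef and big class, hence nef and big on $Y$, while $\sum_j c_j(f^*A_j-F_j)$ is nef (a nonnegative combination of nef classes), and a nef-and-big class plus a nef class is again nef and big. Once this numerical positivity is verified, both vanishings apply and the proof is complete.
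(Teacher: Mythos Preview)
The paper does not actually prove this theorem: it states Nadel's vanishing and immediately refers the reader to \cite{Laz-I} for a proof. Your argument is precisely the standard one found there---pull back to a log resolution, kill the higher direct images by relative Kawamata--Viehweg (so that Leray degenerates), and then apply absolute Kawamata--Viehweg on the smooth model $Y$---and it is correct.

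Two small remarks. First, your verification that $N$ is nef and big tacitly uses $c_j\ge 0$ (so that $\sum_j c_j(f^*A_j-F_j)$ is a nonnegative combination of nef classes); this is the standard hypothesis and the only one used in the paper's applications, even though the paper loosely writes $c_j\in\R$ when introducing $Z$. Second, the parenthetical ``which is available here because $X$ has rational singularities and $K_X$ is Cartier'' is slightly misplaced: the vanishing of $R^qf_*\O_Y(K_Y+\lceil N\rceil)$ for $q>0$ is a statement on the smooth variety $Y$ and needs nothing about $X$. The hypotheses on $X$ enter earlier, to make $K_{Y/X}$ an integral divisor and to guarantee that $\J(X,Z)\subseteq\O_X$.
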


We refer to \cite{Laz-I} for proofs,
more general formulations of these vanishing theorems,
and more general settings where multiplier ideal sheaves are defined.

Broadly speaking, the main aim of this paper is to determine sufficient
conditions for the vanishing of the higher cohomology groups
$$
H^i(X,\om_X \otimes \M \otimes \I),
$$
where $\I \subset \O_X$ is an ideal sheaf and $\M$ is a line bundle on $X$.

As one may expect, the strategy is very simple, and relies on finding
formal linear combinations $Z = \sum c_jZ_j$ of proper closed subschemes
of $X$ such that $\I$ is realized as the multiplier ideal sheaf
$$
\I = \J(X,Z),
$$
so that conditions on $\M$ can be determined
to ensure to be in the setting of Nadel vanishing theorem.
One should notice however that different choices of $Z$ may lead to different
conditions on $\M$ (cf. Remark~\ref{rmk:d_i-equal} below).

A simple application of inversion of adjunction, which is the topic of
the next section, is the key ingredient that allows us to determine good
descriptions of certain ideal sheaves as multiplier ideal sheaves.

\section{Inversion of adjunction}

Let $X$ be a normal variety
such that $K_X$ is $\Q$-Cartier (we say that $X$ is
{\it $\Q$-Gorenstein}), and let $Z = \sum c_jZ_j$ be
a formal linear combination of proper closed subschemes of $X$,
with $c_j \in \R$.

For any closed subset $W \subseteq X$, we define the
{\it minimal log discrepancy}
$$
\mld(W;X,Z) := \inf_{f(E) \subseteq W}
\big\{\ord_E(K_{Y/X})+1 - \sum c_j\val_E(Z_j) \big\},
$$
where the infimum is computed by considering all log resolutions
$f \colon Y \to X$ of $(X,Z)$ and all prime divisors $E$ on any such $Y$
with center $f(E)$ contained in $W$.
If $W = \{p\}$, where $p \in X$ is a closed point, then we denote this number
by $\mld(p;X,Z)$.
If $W$ is irreducible and
$\e_W$ is the generic point of $W$, then we similarly define $\mld(\e_W;X,Z)$
by taking the infimum over all prime divisors $E$
with center $f(E) = W$.
For more on minimal log discrepancies, we refer to \cite{Ko0}.

It is easy to see that minimal log discrepancies can either
be $\ge 0$ or $-\infty$. The pair $(X,Z)$ is said to be {\it log canonical}
(or to have {\it log canonical singularities}) if $\mld(X;X,Z) \ge 0$.

We will use the following consequence of inversion of adjunction.

\begin{prop}\label{lem:mld1}
Let $X$ be a locally complete intersection normal variety, and
let $V \subset X$ be a proper subscheme of codimension $e$, scheme-theoretically
given by
$$
V = H_1 \cap \dots \cap H_t
$$
for some divisors $H_i \in |\LL^{\otimes d_i}|$, where $\LL$ is a
globally generated line bundle on $X$ and $d_1 \ge \dots \ge d_t$.
Suppose that both $X$ and $V$ are smooth at the generic point of each irreducible
component of maximal dimension of $V$.
Then for any point $p \in V$ there are sufficiently general
$D_i \in |\LL^{\otimes d_i}\otimes \I_V|$, for $i=1,\dots,e$, such that
$$
\mld(p;X,eV) = \mld(p;X,D_1 + \dots + D_e).
$$
\end{prop}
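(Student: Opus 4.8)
The plan is to reduce the equality of minimal log discrepancies to a comparison of the two subschemes $eV$ and $D_1+\dots+D_e$ near an arbitrary point $p\in V$, exploiting that $V$ is scheme-theoretically cut out by the $H_i$'s and that the divisors $D_i$ are chosen to be general members of the complete linear systems $|\LL^{\otimes d_i}\otimes\I_V|$. The fundamental observation is that each $D_i$ passes through $V$ (since $D_i$ vanishes along $V$), so $D_i = V + (\text{residual})$ as cycles, and the intersection $D_1\cap\dots\cap D_e$ contains $V$ with the right multiplicity. My strategy has two halves: an inequality coming from $V\subseteq D_i$ (so that $eV$ is "smaller" and its mld is "larger"), and a reverse inequality coming from a Bertini-type genericity argument showing that the general $D_i$'s impose no extra singularities along $V$ beyond what $eV$ already contributes.

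\medskip

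\textbf{Step 1 (Reduction to a local/inversion-of-adjunction statement).} First I would use the hypothesis that $X$ is locally complete intersection and that both $X$ and $V$ are smooth at the generic points of the top-dimensional components of $V$. This lets me invoke inversion of adjunction: the mld of a pair $(X,Z)$ at $p$ can be controlled by restricting to a general complete intersection of codimension $e$ cut out by the $\LL^{\otimes d_i}$'s through $V$. Concretely, since $V=H_1\cap\dots\cap H_t$ and $\codim_X V = e$, a general choice of $e$ divisors among the $H_i$ (or general combinations) cuts out $V$ with the correct codimension at $p$. The point is to set up a flag of general members that realizes $V$ as (a component of) a complete intersection, so that computing mld along $V$ becomes computing mld of that complete intersection, which is where the divisors $D_i$ enter.

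\medskip

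\textbf{Step 2 (The two inequalities).} For the inequality $\mld(p;X,eV)\ge \mld(p;X,D_1+\dots+D_e)$, I would use that each general $D_i\in|\LL^{\otimes d_i}\otimes\I_V|$ contains $V$, so for any divisorial valuation $E$ with center through $p$ we have $\val_E(D_i)\ge \val_E(V)$, hence $\sum_i\val_E(D_i)\ge e\,\val_E(V)=\val_E(eV)$; this immediately gives the inequality between log discrepancies. The \emph{reverse} inequality is the substantive direction: I must show that for \emph{sufficiently general} $D_i$, the divisors $D_1,\dots,D_e$ behave, infinitesimally along $V$ near $p$, exactly like $e$ copies of $V$ plus a residual part that meets $V$ transversally (hence contributes nothing to the mld along $V$). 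Here the genericity is essential: a general member of $|\LL^{\otimes d_i}\otimes\I_V|$ is smooth away from $V$ and from the base locus, and its "extra" vanishing beyond $V$ is a reduced divisor meeting $V$ generically transversally. Combining the $e$ general choices via a Bertini argument (applied on a suitable log resolution, or via the openness of the log-canonical-type conditions in families), I would conclude that no divisor $E$ over $p$ can have $\sum_i\val_E(D_i) > \val_E(eV)$ contributing a strictly smaller log discrepancy.

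\medskip

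\textbf{The main obstacle} will be the reverse inequality, and specifically making the Bertini/genericity argument precise on a \emph{fixed} log resolution that simultaneously resolves $V$, all the $H_i$'s, and the generic members $D_i$. The difficulty is that one cannot just take each $D_i$ general independently and hope the resolution stays fixed; rather, I would choose a single log resolution $f\colon Y\to X$ of $(X,V)$ first, then observe that the proper transform of a general $D_i$ contributes to each exceptional divisor $E$ over $p$ exactly $\val_E(V)$ plus the pullback of a general member of a \emph{globally generated} (hence base-point-free away from the locus already blown up) residual system, so that by Bertini the proper transform is smooth and transverse to the exceptional divisors. The hypothesis that $\LL$ is globally generated is precisely what feeds the Bertini step. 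Once this transversality is in place, $\lfloor K_{Y/X}-\sum_i f^{-1}(D_i)\rfloor$ and $\lfloor K_{Y/X}-e\,f^{-1}(V)\rfloor$ agree along the relevant exceptional divisors over $p$, yielding the equality of the two minimal log discrepancies.
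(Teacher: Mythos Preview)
Your reverse inequality in Step~2 has a genuine gap. You assert that on a fixed log resolution of $(X,V)$ the pullback of a general $D_i\in|\LL^{\otimes d_i}\otimes\I_V|$ contributes exactly $\val_E(V)$ along each exceptional $E$ over $p$, with the residual linear system base-point-free so that Bertini applies. This amounts to claiming that $\LL^{\otimes d_i}\otimes\I_V$ is globally generated for every $i\le e$. But only the top-degree sheaf $\LL^{\otimes d_1}\otimes\I_V$ is known to be globally generated: since $d_1\ge d_j$ for all $j$ and $\LL$ is globally generated, the equations of the $H_j$ (times sections of $\LL^{\otimes(d_1-d_j)}$) generate $\I_V$ in degree $d_1$. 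For $i>1$ with $d_i<d_1$ the base ideal of $|\LL^{\otimes d_i}\otimes\I_V|$ can be strictly contained in $\I_V$, so a general $D_i$ may satisfy $\val_E(D_i)>\val_E(V)$ for some $E$ centered through $p$; the Bertini step then fails and your chosen resolution need not resolve $(X,D_1+\dots+D_e)$. The hypothesis that $\LL$ is globally generated does not by itself repair this.

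The paper avoids this obstruction by an induction on $e$ together with inversion of adjunction, rather than a direct comparison on one resolution. Only one copy of $V$ is traded at a time: global generation of $\LL^{\otimes d_1}\otimes\I_V$ gives $\mld(p;X,eV)=\mld(p;X,D_1+(e-1)V)$ for general $D_1$; inversion of adjunction then yields $\mld(p;X,D_1+(e-1)V)=\mld(p;D_1,(e-1)V)$. On $D_1$ the scheme $V$ is cut out by $H_2|_{D_1},\dots,H_t|_{D_1}$ with top degree $d_2$, so the inductive hypothesis applies with codimension $e-1$; a second use of inversion of adjunction returns the answer to $X$. The generic-smoothness hypothesis on $X$ and $V$ is used precisely to ensure that a general $D_1$ is normal (regular in codimension one and l.c.i.), so that the induction is legitimate. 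Your Step~1 gestures at inversion of adjunction but then abandons it for the direct Bertini argument; it is exactly the inductive restriction to $D_1$ that sidesteps the failure of global generation in the lower degrees.
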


\begin{proof}
First we observe that $\LL^{\otimes d_1} \otimes \I_V$ is globally generated.
Since one can compute minimal log discrepancies by using a log resolution
where the inverse image of $p$ is a simple normal crossing divisor,
it follows in particular that if $D_1 \in |\LL^{\otimes d_1}\otimes\I_V|$ is
sufficiently general, then
\begin{equation}\label{eq:1}
\mld(p;X,V + Z) = \mld(p;X,D_1 + Z)
\end{equation}
for any formal linear combination $Z = \sum c_jZ_j$ of proper subschemes of $X$.

We proceed by induction on $e \ge 1$.
The above formula with $Z = 0$ verifies the statement for $e=1$,
so we can assume that $e \ge 2$.
We fix general $D_i \in |\LL^{\otimes d_i}\otimes \I_V|$
in the following order: we first choose a general $D_1$, then a general $D_2$
(the generality condition on $D_2$ may depend on the choice of $D_1$), and so forth.
Note that
$$
V = D_1 \cap \dots \cap D_e \cap H_{e+1} \cap \dots \cap H_t.
$$

Note that $D_1$ is a locally complete intersection scheme,
and it is regular in codimension one, and hence normal,
if chosen with sufficient generality.
Indeed, since the base locus of $|\LL^{\otimes d_1}\otimes\I_V|$ is
equal to the support of $V$,
the restriction of a general $D_1$ to the open set $X^\o = X \setminus V$
is smooth on the regular locus of $X^\o$ and
intersects properly the singular locus of $X^\o$, and thus
it is regular in codimension one.
On the other hand, since both $X$ and $V$ are smooth at the generic point
of each irreducible component of $V$ of codimension $e$,
a general choice of $D_1$ will also be smooth at that point.
Recalling that we are assuming that $e \ge 2$, we conclude that
a sufficiently general $D_1$ is regular in codimension one.

Since $D_1$ is smooth at the generic point of each irreducible
component of $V$ of maximal dimension,
we can apply induction by restricting to $D_1$. By \eqref{eq:1} with $Z = (e-1)V$,
\cite[Corollary~8.2]{EM2} applied twice, and induction,
we get
\begin{align*}
\mld(p;X,eV)
&= \mld(p;X,D_1+(e-1)V) \\
&= \mld(p;D_1,(e-1)V) \\
&= \mld(p;D_1,D_2|_{D_1} + \dots + D_e|_{D_1}) \\
&= \mld(p;X,D_1 + \dots + D_e).
\end{align*}
\end{proof}

Suppose now that $V$
is a normal $\Q$-Gorenstein variety, and let $r$ be a positive integer
such that $rK_V$ is Cartier. The image of the canonical map
$(\Om_V^{\dim V})^{\otimes r} \to \O_V(rK_V)$ is equal to
$I_r \. \O_V(rK_V)$ for some ideal sheaf $I_r \subseteq \O_V$.
Let
$$
J_r := (\overline{\Jac_V^r} : I_r) \subseteq \O_V,
$$
where $\Jac_V\subseteq \O_V$ is the Jacobian ideal sheaf of $V$.
By \cite[Corollary~9.4]{EM2}, the ideal sheaves $\Jac_V^r$ and $I_r\.J_r$
have the same integral closure.
Let $\S_r \subset V$ be the subscheme defined by $J_r$.

\begin{defi}\label{def:lci-subscheme}
With the above notation,
we call $\S_r$ the {\it $r$-th lci-defect subscheme} of $V$.
\end{defi}

By taking an embedding of $V$ in a smooth ambient variety $X$,
it follows from \cite[Theorem~8.1]{EM2} that if $s$
is another positive integer such that $sK_V$ is Cartier, then
$$
\mld(W;V,\tfrac 1r \S_r) = \mld(W;X,eV)
= \mld(W;V,\tfrac 1s \S_s)
$$
for every closed subset $W \subseteq V$, where $e = \codim_XV$
(see also \cite{Kaw}). In particular,
the property that $(V,\tfrac 1r \S_r)$ be log canonical
(in some neighborhood or on the whole $V$) is independent
of the choice of $r$. As we will be using this condition on singularities,
we fix the following terminology.

\begin{defi}\label{def:lci-lc-sing}
A normal $\Q$-Gorenstein variety $V$ is said to be
{\it lci-defectively log canonical},
or to have {\it lci-defectively log canonical singularities},
if for some (equivalently, for every) positive integer $r$
such that $rK_X$ is Cartier, the pair $(V,\frac 1r \S_r)$
is log canonical.
\end{defi}

\begin{rmk}
Being lci-defectively log canonical is in general a
stronger condition than being log canonical; the two
conditions are equivalent if $V$ is
a locally complete intersection variety.
\end{rmk}

One immediately obtains from the proposition the
following variant of inversion of adjunction.

\begin{cor}\label{thm:mld1}
Let $X$ be a smooth variety, and
let $V \subset X$ be proper subvariety of codimension $e$, scheme-theoretically
given by
$$
V = H_1 \cap \dots \cap H_t
$$
for some divisors $H_i \in |\LL^{\otimes d_i}|$, where $\LL$ is a line bundle
on $X$ and $d_1 \ge \dots \ge d_t$.
Assume that $V$ is a normal variety with $\Q$-Gorenstein singularities,
let $r$ be a positive integer such that $rK_V$ is Cartier,
and let $\S_r$ be the $r$-th lci-defect subscheme of $V$.
Then for every point $p \in V$,
there are sufficiently general $D_i \in |\LL^{\otimes d_i}\otimes \I_V|$,
for $i=1, \dots, e$, such that
$$
\mld(p;V,\tfrac 1r \S_r) = \mld(p;X,D_1 + \dots + D_e).
$$
In particular, $(X,D_1 + \dots + D_e)$ is log canonical
in a neighborhood of $p$ if and only if
$V$ is lci-defectively log canonical in a neighborhood of $p$.
\end{cor}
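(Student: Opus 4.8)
The plan is to read the statement off Proposition~\ref{lem:mld1} together with the inversion-of-adjunction identity recorded just before Definition~\ref{def:lci-lc-sing}, the latter being the only ingredient that uses that $V$ is normal and $\Q$-Gorenstein. First I would check that the hypotheses of Proposition~\ref{lem:mld1} are satisfied here. Since $X$ is smooth, it is in particular a locally complete intersection normal variety. Since $V$ is a variety it is irreducible and reduced, so by generic smoothness in characteristic zero it is smooth at its generic point; as $X$ is smooth everywhere, both $X$ and $V$ are smooth at the generic point of $V$. Keeping in mind the global generation of $\LL$ under which Proposition~\ref{lem:mld1} was proved (as holds in the applications, where $\LL = \O_{\P^n}(1)$), the proposition then yields, for suitably general $D_i \in |\LL^{\otimes d_i}\otimes\I_V|$ with $i = 1,\dots,e$, the equality
$$
\mld(p;X,eV) = \mld(p;X,D_1 + \dots + D_e).
$$

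Next I would apply the identity $\mld(W;V,\tfrac1r\S_r) = \mld(W;X,eV)$, valid for every closed subset $W \subseteq V$ by \cite[Theorem~8.1]{EM2}, in the case $W = \{p\}$. Combining it with the previous display gives at once the asserted equality
$$
\mld(p;V,\tfrac1r\S_r) = \mld(p;X,D_1 + \dots + D_e).
$$

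For the final ``in particular'' I would invoke the standard fact that a pair is log canonical in a neighborhood of a closed point precisely when its minimal log discrepancy at that point is non-negative, recalling that these discrepancies are either $\ge 0$ or $-\infty$. Applying this to the pairs $(X,D_1 + \dots + D_e)$ and $(V,\tfrac1r\S_r)$ and feeding in the equality just obtained, I find that $(X,D_1 + \dots + D_e)$ is log canonical near $p$ if and only if $\mld(p;X,D_1 + \dots + D_e) \ge 0$, if and only if $\mld(p;V,\tfrac1r\S_r) \ge 0$, if and only if $(V,\tfrac1r\S_r)$ is log canonical near $p$; by Definition~\ref{def:lci-lc-sing} this last condition is exactly that $V$ be lci-defectively log canonical near $p$. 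The one point deserving care, and which I expect to be the main (if modest) obstacle, is precisely this passage from the single-point equality of minimal log discrepancies to a statement about a whole neighborhood of $p$: it is the quoted equivalence between log canonicity near $p$ and non-negativity of $\mld(p;\,\cdot\,)$ that makes the pointwise output of Proposition~\ref{lem:mld1} sufficient, sparing one the need to promote that equality to all subvarieties through $p$.
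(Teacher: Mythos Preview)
Your proposal is correct and follows essentially the same route as the paper: invoke \cite[Theorem~8.1]{EM2} to identify $\mld(p;V,\tfrac1r\S_r)$ with $\mld(p;X,eV)$, apply Proposition~\ref{lem:mld1} to pass to $\mld(p;X,D_1+\dots+D_e)$, and then use the standard equivalence between log canonicity near $p$ and $\mld(p;\,\cdot\,)\ge 0$ for the final assertion. Your additional care in verifying the hypotheses of Proposition~\ref{lem:mld1} (smoothness of $X$ and $V$ at the generic point of $V$, and the global generation of $\LL$ that the corollary's statement tacitly inherits) is appropriate and does not depart from the paper's argument.
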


\begin{proof}
By \cite[Theorem~8.1]{EM2} we have
$$
\mld(p;V,\tfrac 1r \S_r) = \mld(p;X,eV),
$$
and thus the first assertion follows from Proposition~\ref{lem:mld1}.
The last assertion is a consequence of the general fact that
if $Y$ is a normal $\Q$-Gorenstein variety, $Z \subset Y$
is a proper closed subscheme, and $c > 0$, then
for every $q \in Y$ the pair
$(Y,cZ)$ is log canonical near $q$ if and only
if $\mld(q;Y,cZ) \ge 0$.
\end{proof}

\section{Proof of Theorem~\ref{thm:vanishing}}

Let $X$ be a locally complete intersection
projective variety with rational singularities,
and consider on $X$ a globally generated line bundle $\LL$ and an ample line bundle $\AA$.
Let $V \subset X$ be a pure-dimensional
proper subscheme with no embedded points, scheme-theoretically given by
$$
V = H_1 \cap \dots \cap H_t
$$
for some divisors $H_i \in |\LL^{\otimes d_i}|$, where $d_1 \ge \dots \ge d_t$.
Let $e = \codim_XV$, and suppose that the pair $(X,eV)$ is log canonical.

We consider the base scheme $B \subset X$ of the linear system
$$
|\LL^{\otimes (d_1 + \dots + d_e)}\otimes\I_V^e|.
$$
Given any $p \in V$, we
fix sufficiently general divisors
$D_i \in |\LL^{\otimes d_i}\otimes\I_V|$ for $i=1,\dots,e$.
Note that
$$
D = D_1 + \dots + D_e \in |\LL^{\otimes (d_1 + \dots + d_e)}\otimes\I_V^e|.
$$

The following property is of independent interest.

\begin{prop}\label{prop:X-V-smooth}
Let $X$ be a locally complete intersection normal variety.
Let $V \subset X$ be a closed subscheme,
and let $\e_W$ be the generic point of
an irreducible component $W$ of $V$.
Suppose that $(X,eV)$ is log canonical at $\e_W$,
where $e$ is the codimension of $W$ in $X$.
Then both $X$ and $V$ are smooth at $\e_W$.
\end{prop}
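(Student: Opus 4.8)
The plan is to pass to the local ring at $\e_W$. Set $R = \O_{X,\e_W}$: this is a local complete intersection normal local ring of dimension $e = \codim_X W$, with maximal ideal $\mm$, and since $\e_W$ is the generic point of a component of $V$ the ideal $\a := \I_{V,\e_W}$ is $\mm$-primary and contained in $\mm$. By definition the hypothesis says that
$$
\ord_E(K_{Y/X}) + 1 \ge e \cdot \val_E(\a)
$$
for every prime divisor $E$ over $\e_W$. What must be shown is that $R$ is regular (this is smoothness of $X$ at $\e_W$) and that $\a = \mm$ (this, once $R$ is regular, is smoothness of $V$ at $\e_W$).

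First I would establish smoothness of $X$. Because $\a \subseteq \mm$ one has $\val_E(\a) \ge \val_E(\mm) \ge 1$ for every $E$ centered in $W$, so the displayed inequality gives $\ord_E(K_{Y/X}) + 1 \ge e$ for all such $E$, i.e. $\mld(\e_W;X) \ge e$. On the other hand $\mld(\e_W;X) \le \codim_X W = e$ always holds, and for a local complete intersection variety equality forces $X$ to be smooth at $\e_W$; this is the one deep input, and I would take it from the analysis of minimal log discrepancies of local complete intersection varieties in \cite{EM2} (presenting $X$ as a complete intersection $\{g_1 = \dots = g_c = 0\}$ in a smooth ambient $S$). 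Concretely, the equality case can be seen by the valuative inversion-of-adjunction identity $\mld(\e_W;X,eV) = \mld(\e_W;S, eV + \mathrm{div}(g_1)+\dots+\mathrm{div}(g_c))$ for general $g_i$: blowing up $\e_W$ on $S$ forces $\mult_{\e_W}(g_i) = 1$ for each $i$, and were the linear leading forms of the $g_i$ to span only an $s$-dimensional subspace with $s < c$, the monomial valuation $E$ of weight $2$ along that subspace and weight $1$ transversally would satisfy $a_E(S) = (e+c)+s$, $\val_E(g_i) = 2$ and $\val_E(V) \ge 1$, so that $a_E(S) - e\,\val_E(V) - \sum_i \val_E(g_i) \le s - c < 0$, contradicting log canonicity. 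Hence the $g_i$ cut out a smooth complete intersection and $X$ is smooth at $\e_W$.

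With $X$ smooth at $\e_W$, $R$ is a regular local ring of dimension $e$ and the pair is just $(R, e\,\a)$ with $\a$ an $\mm$-primary ideal. Blowing up $\mm$ (exceptional log discrepancy $e$, value $\ord_\mm(\a)$) shows at once that $\ord_\mm(\a) = 1$. To conclude $\a = \mm$ I would use the multiplicity bound for log canonical thresholds on smooth varieties: log canonicity of $(R,e\,\a)$ means $\mathrm{lct}(\a) \ge e$, while the inequality $\mathrm{lct}(\a) \le e / e(\a)^{1/e}$ forces the Hilbert--Samuel multiplicity $e(\a)$ to be $1$. Since $R$ is regular and $\a$ is $\mm$-primary with $e(\a) = 1 = e(\mm)$, the ideal $\a$ is a reduction of $\mm$; as the residue field at $\e_W$ is infinite, $\mm$ has no proper reduction, so $\a = \mm$ and $V$ is smooth at $\e_W$.

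The hard part is the smoothness of $X$: the bound $\mld(\e_W;X) \le \codim_X W$ together with its equality case is the genuine obstacle, and everything rests on the description of log discrepancies of local complete intersection varieties from \cite{EM2} (equivalently, on controlling non-transversality of the defining equations by a well-chosen monomial valuation, as above). A lesser point to watch is the final implication $e(\a) = 1 \Rightarrow \a = \mm$, which uses that the maximal ideal of a regular local ring with infinite residue field admits no proper reduction.
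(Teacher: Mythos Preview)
Your argument is correct and genuinely different from the paper's. The paper proceeds by a double induction using only inversion of adjunction: it embeds $X$ as a complete intersection of codimension $c$ in a smooth $M$, replaces $cX+eV$ by $cH+eL$ for general hyperplane sections $H\supset X$ and $L\supset V$, observes that $\mld(\e_W;M,cH+eL)\le 0$ forces $H$ smooth at $\e_W$, and inducts on $c$ to get $X$ smooth; then, with $X$ smooth, it cuts by a general $S\supset V$ and inducts on $e$ to get $V$ smooth. You instead handle smoothness of $X$ in one shot by a well-chosen monomial valuation detecting dependent linear parts of the $g_i$, and you handle smoothness of $V$ via the de~Fernex--Ein--Musta\c{t}\u{a} inequality $\mathrm{lct}(\a)\le e/e(\a)^{1/e}$ together with Rees' theorem and the fact that the maximal ideal of a regular local ring has no proper reduction. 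The paper's route is more self-contained (it uses nothing beyond \cite{EM2}), while yours avoids all induction and is arguably more conceptual, at the cost of importing the multiplicity bound for log canonical thresholds. One small point: the identity $\mld(\e_W;X,eV)=\mld(\e_W;S,eV+\sum_i\mathrm{div}(g_i))$ you invoke is not literally the statement in \cite{EM2}, which is phrased with $cX$ rather than $\sum_i\mathrm{div}(g_i)$; but your monomial valuation satisfies $\val_E(\I_X)=\min_i\val_E(g_i)\ge 2$, so the contradiction $a_E(S)-e\val_E(V)-c\val_E(\I_X)\le s-c<0$ goes through verbatim with $cX$ in place of $\sum_i\mathrm{div}(g_i)$, and no extra justification is needed.
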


\begin{proof}
This can easily be seen using inversion of adjunction, as follows.
We work locally near $\e_W$, so that all minimal log discrepancies over $\e_W$
considered in the proof can be thought as minimal log discrepancies
over $W$; this allows us to apply inversion of adjunction where needed.

We also assume that $X$ is a complete intersection subvariety of a smooth
variety $M$. Let $c$ be the codimension of $X$ in $M$.
If $c = 0$, then $X$ is smooth at $\e_W$. Suppose otherwise that $c > 0$, and let
$H, L \subset M$ be general (sufficiently positive) hyperplane
sections vanishing, respectively, along $X$ and $V$.
It follows by inversion of adjunction that
$$
\mld(\e_W;M,cH + eL) = \mld(\e_W;M,cX+eV) = \mld(\e_W;X,eV) \ge 0
$$
On the other hand, since $M$ is smooth and $\codim_MW = c+e$,
we have $\mld(\e_W;M,cH + eL) \le 0$, with equality holding only
if both $H$ and $L$ are smooth at $\e_W$. We conclude that this is the case,
and in particular that $\mld(\e_W;X,eV) = 0$.
As we can assume that the equation of $H$ belongs to a regular sequence on $M$
locally cutting out
$X$ as a complete intersection near $\e_W$, it follows by induction on $c$
that $X$ is smooth at $\e_W$.

Let $S$ be, locally near $\e_W$, a general hyperplane section of $X$
vanishing along $V$ (equivalently, let $S = L \cap X$). Then
$\mld(\e_W;X,eS) = \mld(\e_W;X,eV) = 0$,
which implies that $S$ is smooth at $\e_W$.
Applying inversion of adjunction on $X$, we obtain
$$
\mld(\e_W;S,(e-1)V) = \mld(\e_W;X, S + (e-1)V) = \mld(\e_W;X,eV) = 0.
$$
We conclude by induction on $e$ that $V$ is smooth at $\e_W$.
\end{proof}

\begin{rmk}
This property is related to a conjecture of Shokurov, which says that if
$(X,B)$ is an effective log pair (namely, $X$ is normal and $B$ is
an effective $\Q$-divisor such that $K_X + B$ is $\Q$-Cartier)
and $\x \in X$ is a Grothendieck point of codimension $e$, then
$\mld(\x;X,B) \le e$, and moreover
$\mld(\x;X,B) \le e - 1$ if $X$ is singular at $\x$ (cf. \cite{Amb}).
Similar arguments as in the proof of Proposition~\ref{prop:X-V-smooth}
show that this conjecture is true if $X$ is a locally complete intersection
variety. Indeed, it suffices to show that the condition
$\mld(\x;X,B) > e-1$ implies that $X$ is smooth at $\x$.
Assuming that $X$ is locally complete intersection,
one takes, locally near $\x$,
an embedding of $X$ as a complete intersection subvariety of a smooth
variety $M$. Then one can reduce the codimension $c$ of the embedding
(assuming that $c > 0$) as follows. First notice that
$B$ is $\Q$-Cartier, and thus $B = rA$, where $A$ is an effective Cartier divisor
and $r \in \Q_+$.
We regard $A$ as a subscheme of $X$, and hence of $M$.
Working locally near $\x$ and taking a  general (sufficiently positive) hyperplane
section $H \subset M$ vanishing along $X$, inversion of adjunction gives
$$
\mld(\x;M,cH + rA) = \mld(\x;M,cX+rA) = \mld(\x;X,rA) > e-1.
$$
This implies that $H$ is smooth at $\x$, and thus we conclude
by induction on $c$ that $X$ is smooth too at $\x$, as required.
\end{rmk}

We now come back to the setting introduced at the beginning of the section.
By Proposition~\ref{prop:X-V-smooth}, we see that
both $X$ and $V$ are smooth at the generic point of each irreducible
component $V_i$ of $V$. We can therefore apply
Proposition~\ref{lem:mld1}, which implies that $(X,D)$ is log canonical near $p$.
Observing that $\O_X(-D) \subseteq \I_B$, we conclude that
$(X,B)$ is log canonical near $p$, and therefore in a whole
neighborhood of $V$.

Note that
$$
\val_E(B) \ge e\.\val_E(V)
$$
for every prime divisor $E$ over $X$, and
this is an equality if $E = E_i$ is the divisor dominating an
irreducible component $V_i$ of $V$ that is extracted by the blow up
$$
\m\colon X' = \Bl_VX \to X
$$
of $X$ along $V$.
The inequality follows by the inclusion $\I_B \subseteq \I_V^e$.
For the second assertion, just notice that
if $D_1,\dots,D_e$ are generally chosen
as above, then their equations locally cut out $V$
at the generic point of $V_i$,
and hence $D$ has multiplicity $e$ at that point.

\begin{lem}
With the above notation, if $0 < \d \ll 1$, then
$$
\J(X,(1-\d)B + \d e V) = \I_{V \cup W},
$$
where $W$ is a closed subscheme of $X$ disjoint from $V$.
\end{lem}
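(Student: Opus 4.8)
The plan is to realize $\I_V$ as the multiplier ideal $\J(X,(1-\d)B+\d eV)$ in a neighborhood of $V$, and then to read off $W$ from what happens away from $V$. Fix a log resolution $f\colon Y\to X$ of $(X,B+V)$ that factors through the blow-up $\m\colon X'=\Bl_VX\to X$, so that for each irreducible component $V_i$ of $V$ the exceptional divisor $E_i$ lying over $V_i$ is a prime divisor on $Y$. For a prime divisor $E$ on $Y$ write $a_E=\ord_E(K_{Y/X})$, $b_E=\val_E(B)$ and $v_E=\val_E(V)$, and set
\[
c_E \ :=\ a_E-(1-\d)b_E-\d e\,v_E \ =\ (a_E-b_E)+\d\,(b_E-e\,v_E).
\]
By definition $\J(X,(1-\d)B+\d eV)=f_*\O_Y\big(\sum_E\lceil c_E\rceil E\big)$, so a regular function $g$ lies in this ideal if and only if $\ord_E(g)\ge-\lceil c_E\rceil$ for every $E$ on $Y$.

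The computation rests on four facts, all already available. First, $(X,B)$ is log canonical in a neighborhood $U$ of $V$ (established above from Proposition~\ref{lem:mld1} together with $\O_X(-D)\subseteq\I_B$), so $a_E-b_E\ge-1$ for every $E$ whose center meets $U$. Second, the inequality $b_E\ge e\,v_E\ge0$ displayed above holds for all $E$, with equality $b_{E_i}=e\,v_{E_i}$ along each $E_i$. Third, since $X$ is locally complete intersection with rational singularities it has canonical singularities, so $K_{Y/X}$ is effective and $a_E\ge0$. Fourth, at the generic point of each $V_i$ both $X$ and $V$ are smooth of codimension $e$ (Proposition~\ref{prop:X-V-smooth}), whence $a_{E_i}=e-1$, $v_{E_i}=1$, and therefore $c_{E_i}=(e-1)-e+\d\cdot0=-1$. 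Since only finitely many prime divisors on $Y$ appear in $K_{Y/X}+f^{-1}(B)+f^{-1}(V)$, I may fix $0<\d\ll1$ once and for all.

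Near $V$ I then check the two inclusions separately. For $\I_V\subseteq\J$, take $g\in\I_V$, so $\ord_E(g)\ge v_E$; it is enough to see $v_E\ge-c_E$. When $v_E\ge1$ this reads $(a_E-b_E)+v_E+\d(b_E-e\,v_E)\ge-1+1+0=0$, and when $v_E=0$ one has $c_E=(a_E-b_E)+\d b_E>-1$ (using $a_E\ge0$ when $b_E=0$), so $-\lceil c_E\rceil\le0\le\ord_E(g)$; here the perturbation by $\d eV$ is exactly what pushes every divisor with $b_E>e\,v_E$ off the threshold $c_E=-1$, keeping $\J$ from dropping below $\I_V$. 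For the reverse inclusion $\J\subseteq\I_V$, take $g\in\J$ and apply the membership criterion to $E=E_i$: since $c_{E_i}=-1$ we get $\ord_{E_i}(g)\ge1=v_{E_i}$, which---because $X$ and $V$ are smooth of codimension $e$ at the generic point $\e_{V_i}$---means precisely that $g\in\I_V\O_{X,\e_{V_i}}$ for every $i$. As $V$ has no embedded components, $\I_V=\bigcap_i(\I_V\O_{X,\e_{V_i}}\cap\O_X)$, so $g\in\I_V$. This yields $\J=\I_V$ throughout $U$.

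Finally I globalize. Since $\J$ and $\I_V$ agree on the neighborhood $U\supseteq V$, the cosupport of $\J$ inside $U$ is exactly $V$; letting $W$ be the subscheme of $X$ cut out by $\J$ along the remaining components of $\Supp(\O_X/\J)$, these are closed and disjoint from $V$, so $\J=\I_V\cap\I_W=\I_{V\cup W}$ with $V\cap W=\emptyset$ (and $W$ is nonempty precisely when $|\LL^{\otimes(d_1+\dots+d_e)}\otimes\I_V^e|$ has base points off $V$). I expect the only genuinely delicate point to be the inclusion $\J\subseteq\I_V$: the vanishing condition along $E_i$ controls $g$ only at the generic point of $V_i$, so the passage from generic membership in $\I_V$ to honest membership is where the no-embedded-components hypothesis is indispensable.
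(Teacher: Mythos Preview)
Your proof is correct and follows essentially the same approach as the paper: both arguments verify the divisorial inequalities $\lceil c_E\rceil\ge -v_E$ (with equality along each $E_i$) by combining the log canonicity of $(X,B)$ near $V$ with the inequality $b_E\ge e\,v_E$, and then use the primary decomposition $\I_V=\bigcap_i\I_{V_i}$ (equivalently, the absence of embedded components) to conclude $\J=\I_V$ on $U$. Your write-up is somewhat more explicit---you separate the two inclusions, introduce the bookkeeping $a_E,b_E,v_E,c_E$, and spell out that canonical singularities of $X$ give $a_E\ge0$---but the underlying argument is the paper's.
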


\begin{proof}
Observe that if $E$ is a prime divisor on some model $Y$ over $X$,
with center contained in $V$, then
$$
\val_E ((1-\d)B + \d e V) - \ord_E(K_{Y/X})
\le \val_E (B) - \ord_E(K_{Y/X})
\le 1 \le \val_E(V),
$$
and this is a chain of equalities if $E = E_i$ is the divisor dominating an
irreducible component $V_i$ that is extracted by $\m$.
On the other hand, if the center of $E$ intersects $V$ but is not contained in it,
then
$$
\val_E ((1-\d)B + \d e V) - \ord_E(K_{Y/X}) =
\val_E ((1-\d)B) - \ord_E(K_{Y/X}) < 1,
$$
since $(X,(1-\d)B)$ is Kawamata log terminal in a neighborhood
$U \subseteq X$ of $V$, and hence
$$
\lrd \val_E ((1-\d)B + \d e V) - \ord_E(K_{Y/X})\rrd \le 0 = \val_E(V).
$$
Since $V$ is a generically reduced pure-dimensional
scheme with no embedded components, its ideal sheaf has primary decomposition
$$
\I_V = \bigcap \I_{V_i},
$$
where $\I_{V_i}$ is the ideal
sheaf of the irreducible component $V_i$.
We have $\I_{V_i} = \m_*\O_{X'}(-E_i)$, and hence
$$
\J(X,(1-\d)B + \d e V)|_U = \I_V|_U.
$$
Therefore
$$
\J(X,(1-\d)B + \d e V) = \I_{V \cup W}
$$
for some subscheme $W$ in the complement of $U$, and hence disjoint from $V$.
This completes the proof of the lemma.
\end{proof}

Applying Nadel's vanishing theorem, we obtain
$$
H^i(X,\om_X\otimes \LL^{\otimes k}\otimes\AA\otimes\I_{V \cup W}) = 0
\for i > 0, \ k \ge d_1 + \dots + d_e.
$$
The deduce the stated vanishing from the following lemma.

\begin{lem}\label{lem:2}
Let $Z_1$ and $Z_2$ be two disjoint, nonempty, closed subschemes of $X$,
and suppose that for some nef and big line bundle $\M$ we have
$H^i(X,\om_X\otimes\M\otimes\I_{Z_1 \cup Z_2}) = 0$ for $i > 0$.
Then for each $j = 1,2$ we have
$$
H^i(X,\om_X\otimes\M\otimes\I_{Z_j}) = 0 \for i > 0.
$$
\end{lem}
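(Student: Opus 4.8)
The plan is to deduce the vanishing for each $Z_j$ from the single base vanishing
$$
H^i(X,\om_X\otimes\M) = 0 \for i > 0,
$$
which is available from the Kawamata--Viehweg vanishing theorem recalled at the beginning of Section~2: here $X$ has rational singularities, $\om_X$ is a line bundle (as $X$ is locally complete intersection, hence Gorenstein), and $\M$ is nef and big. The idea is to interpolate the three ideal sheaves $\I_{Z_1}$, $\I_{Z_2}$, $\I_{Z_1\cup Z_2}$ by a single Mayer--Vietoris sequence that turns the hypothesis on $Z_1\cup Z_2$ into the conclusion on the individual $Z_j$.

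First I would extract from disjointness the two facts at the level of ideal sheaves. Since $Z_1\cap Z_2=\emptyset$, at each point of $X$ at least one of the two subschemes is absent, so the ideals are comaximal, $\I_{Z_1}+\I_{Z_2}=\O_X$; and the ideal of the scheme-theoretic union is $\I_{Z_1\cup Z_2}=\I_{Z_1}\cap\I_{Z_2}$. Together these give the short exact sequence
$$
0 \lra \I_{Z_1\cup Z_2} \xrightarrow{\,a\mapsto(a,a)\,} \I_{Z_1}\oplus\I_{Z_2} \xrightarrow{\,(b,c)\mapsto b-c\,} \O_X \lra 0,
$$
in which left exactness encodes $\I_{Z_1}\cap\I_{Z_2}=\I_{Z_1\cup Z_2}$ and surjectivity onto $\O_X$ encodes comaximality.

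Next I would tensor this sequence by the invertible sheaf $\om_X\otimes\M$. Being locally free, it is flat, so exactness is preserved, and passing to the long exact cohomology sequence yields, for each $i>0$, the exact piece
$$
H^i(X,\om_X\otimes\M\otimes\I_{Z_1\cup Z_2}) \lra \bigoplus_{j=1,2} H^i(X,\om_X\otimes\M\otimes\I_{Z_j}) \lra H^i(X,\om_X\otimes\M).
$$
Both outer terms vanish---the left one by hypothesis, the right one by the base vanishing above---so the middle term vanishes; being a direct sum, each summand $H^i(X,\om_X\otimes\M\otimes\I_{Z_j})$ vanishes, which is exactly the assertion.

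The step that deserves a moment's thought is the choice of exact sequence rather than any hard estimate. The naive sequences $0\to\om_X\otimes\M\otimes\I_{Z_j}\to\om_X\otimes\M\to\om_X\otimes\M\otimes\O_{Z_j}\to 0$ would force us to control $H^i(X,\om_X\otimes\M\otimes\O_{Z_j})$, over which we have no information; the Mayer--Vietoris sequence avoids this precisely because its right-hand term is $\O_X$, to which Kawamata--Viehweg vanishing applies. Once the sequence is in place the argument is purely formal, the only geometric input being the comaximality $\I_{Z_1}+\I_{Z_2}=\O_X$ forced by disjointness.
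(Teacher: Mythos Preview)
Your proof is correct, and it is genuinely different from the paper's argument. The paper takes precisely the route you dismiss in your last paragraph: it uses the restriction sequence
\[
0 \to \om_X\otimes\M\otimes\I_{Z_1\cup Z_2} \to \om_X\otimes\M \to (\om_X\otimes\M|_{Z_1})\oplus(\om_X\otimes\M|_{Z_2}) \to 0
\]
to extract, from the hypothesis and Kawamata--Viehweg vanishing, both the surjectivity of $H^0(\om_X\otimes\M)\to H^0(\om_X\otimes\M|_{Z_j})$ and the vanishing of $H^i(\om_X\otimes\M|_{Z_j})$ for $i>0$, and then feeds these into the individual sequences $0\to\om_X\otimes\M\otimes\I_{Z_j}\to\om_X\otimes\M\to\om_X\otimes\M|_{Z_j}\to 0$. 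So your claim that ``we have no information'' about $H^i(\om_X\otimes\M\otimes\O_{Z_j})$ is not quite right: the paper shows exactly how to obtain that information from the union sequence. Your Mayer--Vietoris sequence $0\to\I_{Z_1\cup Z_2}\to\I_{Z_1}\oplus\I_{Z_2}\to\O_X\to 0$ bypasses this detour entirely and reaches the conclusion in one step, which is cleaner; the paper's route, on the other hand, records the auxiliary vanishing on the $Z_j$ themselves along the way, though that is not used elsewhere.
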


\begin{proof}
Since $Z_1 \cap Z_2 = \emptyset$, we have the exact sequence
$$
0 \to \om_X\otimes\M\otimes\I_{Z_1 \cup Z_2} \to
\om_X\otimes\M \to
(\om_X\otimes\M|_{Z_1}) \oplus
(\om_X\otimes\M|_{Z_2}) \to 0.
$$
By Kawamata--Viehweg vanishing theorem, the higher cohomology groups of
$\om_X\otimes\M$ are trivial.
Therefore we see by the vanishing
$H^i(X,\om_X\otimes\M\otimes\I_{Z_1 \cup Z_2}) = 0$
for $i > 0$ that
$$
H^0(X,\om_X\otimes\M)
\to H^0(Z_1,\om_X\otimes\M|_{Z_1})
\oplus H^0(Z_2,\om_X\otimes\M|_{Z_2})
$$
is surjective and
$$
H^i(Z_1,\om_X\otimes\M|_{Z_1}) \oplus
H^i(Z_2,\om_X\otimes\M|_{Z_2}) = 0 \for i >0.
$$
These properties imply the surjectivity of the maps
$$
H^0(X,\om_X\otimes\M) \to H^0(Z_j,\om_X\otimes\M|_{Z_j})
$$
and the vanishings
$$
H^i(Z_j,\om_X\otimes\M|_{Z_j})) = 0 \for i > 0.
$$
We deduce from the exact sequences
$$
0 \to \om_X\otimes\M\otimes\I_{Z_j}
\to \om_X\otimes\M \to
\om_X\otimes\M|_{Z_j} \to 0
$$
that the cohomology groups $H^i(X,\om_X\otimes\M\otimes\I_{Z_j})$
vanish for $i > 0$.
\end{proof}

\begin{rmk}
By inversion of adjunction (see \cite[Theorem~8.1]{EM2}),
the theorem applies in particular to the case when $X$ is smooth and
$V$ is a $\Q$-Gorenstein subvariety with lci-defectively log canonical singularities.
Equivalently, one can use Corollary~\ref{thm:mld1} to adapt the above
proof to this setting.
\end{rmk}

\begin{rmk}\label{rmk:d_i-equal}
It is easy to see by inversion of adjunction that
the ideal sheaf $\I_V$ can be realized as the multiplier ideal
$\J(X,eV)$, and so one gets a very short proof of the theorem
in the special case when all the degrees $d_i$ are equal, as
the fact that $\LL^{\otimes d_1}\otimes\I_V$ is globally
generated implies immediately that
$$
H^i(\om_X\otimes\LL^{\otimes k}\otimes\AA\otimes\I_V) = 0 \for i > 0, \ k \ge ed_1
$$
by Nadel vanishing theorem.
However this realization of $\I_V$ as a multiplier ideal sheaf
does not yield the desired vanishing result
when the degrees $d_1,\dots,d_e$ are not all equal.
\end{rmk}

\section{Applications to regularity and projective normality}\label{sect:regularity}

Given a subscheme $V \subset \P^n$, we have introduced the notation
$$
d(V) := \min \{d_1 + \dots + d_e \mid
\text{$V$ is cut out by forms of degrees $d_1 \ge \dots \ge d_t$}\},
$$
where $e = \codim_{\P^n}(V)$.
As a particular case of Theorem~\ref{thm:vanishing}, we
obtain the following vanishing theorem for ideal
sheaves on projective spaces.

\begin{cor}\label{thm:vanishing-P^n}
Let $V \subset \P^n$ be a pure-dimensional proper subscheme with no embedded
components,
and assume that the pair $(\P^n,eV)$ is log canonical, where $e = \codim_{\P^n}V$.
Then
$$
H^i(\P^n,\I_V(k)) = 0 \for i > 0, \ k \ge d(V) - n.
$$
\end{cor}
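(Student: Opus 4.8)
The plan is to obtain this statement as a direct specialization of Theorem~\ref{thm:vanishing} to the ambient space $X = \P^n$, choosing the auxiliary line bundles so that the twisting sheaf $\om_X \otimes \LL^{\otimes k} \otimes \AA$ collapses to $\O_{\P^n}(k-n)$. Since $\P^n$ is smooth, it is in particular a locally complete intersection projective variety with rational singularities, so the hypotheses of Theorem~\ref{thm:vanishing} on the ambient variety hold automatically, and no further work is needed on that side.

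First I would set $\LL = \O_{\P^n}(1)$, which is globally generated, and unwind the definition of $d(V)$: by definition there exist forms cutting out $V$ scheme-theoretically of degrees $d_1 \ge \dots \ge d_t$ with $d_1 + \dots + d_e = d(V)$, so that $V = H_1 \cap \dots \cap H_t$ with $H_i \in |\LL^{\otimes d_i}|$ and $e = \codim_{\P^n}V$. By hypothesis $V$ is pure-dimensional with no embedded components and the pair $(\P^n,eV)$ is log canonical, so all the conditions of Theorem~\ref{thm:vanishing} on $V$ and on the pair are satisfied once $V$ is presented through this optimal set of defining divisors.

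The only remaining choice is the nef and big line bundle $\AA$, and here I would take $\AA = \O_{\P^n}(1)$, which is ample and hence nef and big. Applying Theorem~\ref{thm:vanishing} then yields
$$
H^i\big(\P^n,\, \om_{\P^n} \otimes \O_{\P^n}(k) \otimes \O_{\P^n}(1) \otimes \I_V\big) = 0 \for i > 0, \ k \ge d_1 + \dots + d_e = d(V).
$$
Since $\om_{\P^n} \cong \O_{\P^n}(-n-1)$, the sheaf appearing above is simply $\I_V(k-n)$. Writing $m = k-n$, the constraint $k \ge d(V)$ becomes $m \ge d(V) - n$, and we recover exactly $H^i(\P^n,\I_V(m)) = 0$ for $i > 0$ and $m \ge d(V) - n$, which is the claim.

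There is no substantial obstacle here, as the entire content is already packaged in Theorem~\ref{thm:vanishing}; the only point requiring care is the bookkeeping of the twists, namely arranging $\om_{\P^n} \otimes \AA \cong \O_{\P^n}(-n)$ so that the degree bound $k \ge d(V)$ from the theorem translates into the regularity-type bound $m \ge d(V) - n$. The choice $\AA = \O_{\P^n}(1)$ is precisely what balances the $-(n+1)$ coming from the canonical bundle against the desired shift, and applying the theorem to the presentation of $V$ that realizes the minimum $d(V)$ is legitimate since the theorem holds for any set of defining divisors of the prescribed degrees.
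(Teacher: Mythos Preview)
Your proposal is correct and matches the paper's own approach: the paper presents this corollary explicitly as ``a particular case of Theorem~\ref{thm:vanishing}'', without further proof, and your specialization $X=\P^n$, $\LL=\AA=\O_{\P^n}(1)$, $\om_{\P^n}\cong\O_{\P^n}(-n-1)$ together with the presentation of $V$ realizing $d(V)$ is exactly how one makes that sentence precise. The bookkeeping with the twists is handled correctly.
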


The first three corollaries stated in the introduction follow from this result
by arguments analogous to those in \cite{BEL}, that we sketch below.

\begin{proof}[Proof of Corollary~\ref{thm:main-gen-case}]
For short, let $d = d(V)$.
The required vanishing
$$
H^i(\P^n,\I_V(d - e + 1 -i)) = 0 \for i >0
$$
follows for $1 \le i \le n-e+1$ by Corollary~\ref{thm:vanishing-P^n}
and for $i > n-e+1$ by dimensional reasons.

Regarding the second assertion, suppose that $V$
is non $(d - e)$-regular. Then necessarily
$$
H^{n-e}(\O_V(d - n-1))
= H^{n-e+1}(\P^n,\I_V(d - e -(n-e+1))) \ne 0.
$$
It follows by duality that
\begin{equation}\label{eq:nonvanishing}
H^0(\om_V(- d + n + 1)) = \Hom(\O_V(d-n-1),\om_V) \ne 0,
\end{equation}
where $\omega_V$ is the dualizing sheaf of $V$ (cf. \cite[Proposition~III.3.1]{Har}).

Let $d_1 \ge \dots \ge d_t$ be the degrees of the equations
cutting $V$ such that $d(V) = d_1 + \dots + d_e$, and
let $W$ be a general complete intersection of type
$(d_1,\dots,d_e)$ containing $V$. Note that $W$ is pure dimensional
with no embedded components.
Denoting by $I_V$ and $I_W$ the saturated ideals of $V$ and $W$
in the polynomial coordinate
ring of $\P^n$, let $R \subseteq \P^n$ be the subscheme defined by the ideal
$$
I_R = [I_W : I_V]
$$
(which is automatically saturated).
By \cite[Proposition~4.2.4]{Mig}, $R$ has no embedded points,
is equidimensional (of the same dimension of $W$), and is algebraically
linked to $V$ via $W$. In particular, $V$ is algebraically linked to $R$
via $W$, and thus we have an exact sequence of sheaves
$$
0 \to \I_W \to \I_R \to \omega_V(n+1-d) \to 0
$$
by \cite[Proposition~4.2.6]{Mig}.
Observing that $\I_{R/W} \cong \I_{R \cap V/V}$ since $W$ has no embedded
components, we obtain an isomorphism
$$
\om_V \cong \I_{R \cap V/V}(d - n-1).
$$
Going back to \eqref{eq:nonvanishing}, we conclude that
$H^0(\I_{R \cap V/V}) \ne 0$.
Since $W$ is connected, if $R$ were nonempty, then it would intersect
every connected component of $V$, and hence
$H^0(\I_{R \cap V/V})$ would be trivial. Therefore $R = \emptyset$.
\end{proof}

\begin{proof}[Proof of Corollary~\ref{thm:main-var-case}]
It follows from Corollary~\ref{thm:main-gen-case}, since
by inversion of adjunction $(\P^n,eV)$ is log canonical (cf. \cite{EM2,Kaw}).
\end{proof}

\begin{rmk}\label{rmk:CU2}
The bound on regularity obtained in \cite{BEL} also holds
allowing $V$ to be singular at a finite number
of points. Similarly, in the statement of \cite{CU}, $V$ is only assumed
to have rational singularities away from a one dimensional set
(the hypothesis of $V$ begin locally complete intersection
in \cite{CU} is also only needed away from a finite set).
At the moment it is unclear to us whether the methods applied in this paper
admit a similar extension.
\end{rmk}

\begin{proof}[Proof of Corollary~\ref{cor:proj-normal-CM}]
The proof is essentially the same as the one of \cite[Corollary~2]{BEL}.
Regarding the second part of the statement,
in order to get the vanishings
$$
H^i(\O_V(k)) = 0 \quad \text{for} \ 0 < i < \dim V \ \text{and} \ k > 0,
$$
we take a resolution of singularities $g \colon V' \to V$.
Since $V$ has rational singularities, we have
$H^i(V,\O_V(k)) = H^i(V',g^*\O_V(k))$, and thus the required vanishings
follow by Serre duality and Kawamata--Viehweg vanishing theorem.
\end{proof}

As in Corollary~\ref{thm:main-gen-case}, let
$V \subset \P^n$ be a pure-dimensional subscheme with no embedded components,
such that the pair $(\P^n,eV)$ is log canonical, where $e = \codim_{\P^n}V$.
Assume that $V$ is Cohen--Macaulay and smooth in codimension 1, and
let $d_1 \ge \dots \ge d_t$ be the degrees of the equations
cutting $V$ such that $d(V) = d_1 + \dots + d_e$.
We have seen that the condition $\reg(V) = d(V) - e +1$
characterizes $V$ being a complete intersection in $\P^n$, and it is natural
to ask whether the next case can be classified as well.
The arguments in the proof of Corollary~\ref{thm:main-gen-case}
yield the following characterization.

\begin{prop}\label{prop:CU1}
With the above notation, suppose that $V$ is not a complete intersection.
Then
$$
\reg(V) = d(V) - e
$$
if and only
either $H^0(\I_{R \cap V/V}(1)) \ne 0$ or $H^1(\I_{R \cap V/V}) \ne 0$,
where $R$ is the algebraic linkage of $V$ via a general
complete intersection of type $(d_1,\dots,d_e)$ containing $V$.
\end{prop}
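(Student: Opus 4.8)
The plan is to push the cohomological analysis from the proof of Corollary~\ref{thm:main-gen-case} one twist further. Since $V$ is not a complete intersection, that corollary already yields $\reg(V) \le d - e$, where $d = d(V)$; in other words $V$ is automatically $(d-e)$-regular. Hence $\reg(V) = d - e$ holds if and only if $V$ fails to be $(d-e-1)$-regular, that is, if and only if
$$
H^i(\P^n, \I_V(d - e - 1 - i)) \ne 0 \quad \text{for some } i > 0.
$$
The proposition thus reduces to locating, and then identifying, the cohomology groups that can obstruct $(d-e-1)$-regularity.

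First I would isolate the relevant indices. For $1 \le i \le n - e - 1$ the twist $d - e - 1 - i$ is at least $d - n$, so Corollary~\ref{thm:vanishing-P^n} forces the vanishing. At the other end, the sequence $0 \to \I_V \to \O_{\P^n} \to \O_V \to 0$ gives $H^i(\I_V(m)) \cong H^{i-1}(\O_V(m))$ for $2 \le i \le n-1$, and since $\dim V = n - e$ this vanishes once $i \ge n - e + 2$; the remaining case $i = n$ is handled by noting that $d \ge e+1$ (as $V$ is not a linear complete intersection), so that $H^n(\O_{\P^n}(d-e-1-n)) = 0$. Consequently only the two boundary indices $i = n-e$ and $i = n-e+1$ can contribute.

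It remains to match these two groups to the stated conditions. Here I would use that $V$ is normal: being Cohen--Macaulay (hence $S_2$) and smooth in codimension one (hence $R_1$), Serre's criterion applies, so $\om_V$ is a rank-one reflexive sheaf and Serre duality on $V$ reads $H^j(V, \O_V(k)) \cong H^{n-e-j}(V, \om_V(-k))^*$. Feeding in the isomorphism $\om_V \cong \I_{R \cap V/V}(d - n - 1)$ produced, via linkage through a general complete intersection $W$ of type $(d_1, \dots, d_e)$, in the proof of Corollary~\ref{thm:main-gen-case}, the top index gives
$$
H^{n-e+1}(\P^n, \I_V(d-n-2)) \cong H^{n-e}(V, \O_V(d-n-2)) \cong H^0(V, \I_{R\cap V/V}(1))^*,
$$
while the index $i = n-e$ gives $H^{n-e}(\P^n, \I_V(d-n-1)) \cong H^1(V, \I_{R\cap V/V})^*$. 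Combining, $V$ fails to be $(d-e-1)$-regular precisely when one of $H^0(\I_{R\cap V/V}(1))$ or $H^1(\I_{R\cap V/V})$ is nonzero, which is the assertion.

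The step I expect to be delicate is the identification at the lower index $i = n - e$. The clean isomorphism $H^i(\I_V(m)) \cong H^{i-1}(\O_V(m))$ requires $2 \le i \le n-1$, so it applies when $\dim V = n - e \ge 2$ but degenerates to a mere cokernel when $V$ is a curve; in that boundary case $H^1(\P^n, \I_V(d-n-1))$ is a projective-normality obstruction rather than an intrinsic invariant of $V$, and matching it to $H^1(\I_{R\cap V/V})$ would require arguing directly with the liaison sequence $0 \to \I_W \to \I_R \to \om_V(n+1-d) \to 0$ together with the arithmetic Cohen--Macaulayness of the complete intersection $W$. Verifying that this recovers exactly the two stated conditions, uniformly in $\dim V$, is the main point to secure.
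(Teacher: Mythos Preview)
Your proposal is correct and is precisely the elaboration the paper has in mind: the paper offers no separate argument for this proposition, only the remark that ``the arguments in the proof of Corollary~\ref{thm:main-gen-case} yield the following characterization,'' and your proof unpacks exactly those arguments---restricting the possible obstructions to $i=n-e$ and $i=n-e+1$ via Corollary~\ref{thm:vanishing-P^n} and dimension, then translating each via Serre duality and the linkage isomorphism $\om_V\cong\I_{R\cap V/V}(d-n-1)$. Your flag on the curve case $n-e=1$ is a genuine subtlety the paper does not address; your suggested fix through the liaison sequence and the arithmetic Cohen--Macaulayness of $W$ is the right direction.
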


\begin{rmk}\label{rmk:CU1}
The first of the two conditions considered in Proposition~\ref{prop:CU1}
is satisfied, for instance, if $V$ is a rational normal curve
of degree 3 in $\P^3$
(which in fact it is well known to have regularity $2$).
A generalization of this is treated in the next example.
In particular, there seems to
be a shift by $1$ in the definition of the notation $\reg(V)$
as adopted in \cite{CU}.
\end{rmk}

\begin{eg}\label{eg:residual}
For $1 \le k \le n-1$, fix a $k$-dimensional linear subspace $\L \subset \P^n$,
and let $V \subset \P^n$ be the residual intersection of
$e := n-k$ general quadrics containing $\L$.
We claim that $\reg(V) = d(V) - e$.

First note that $V$ is Cohen--Macaulay (see, e.g., \cite[Corollary~4.2.9]{Mig}),
and it is not difficult to check that it is smooth in codimension one,
so in particular $V$ is normal.
Notice also that $V$ is not complete intersection, as it is a non-degenerate variety
of degree $2^e - 1$ and codimension $e$.

We observe that $(\P^n,eV)$ is log canonical. To see this,
we construct a log resolution $f \colon Y \to \P^n$ as follows.
We first take the blow up $g \colon Y' \to \P^n$ along $\L$.
Let $E$ be the exceptional divisor of $g$.
For a general choice of the $e$ quadrics vanishing along $\L$,
the proper transform $V' \subset Y'$ of $V$ is smooth and intersects
transversally $E$.
Therefore the blow up of $Y'$ along $V'$
produces the required log resolution of $(\P^n,eV)$, and it is immediate to
check that the discrepancies of the two exceptional divisors
along this pair are $\ge -1$, so that the pair is log canonical.

By construction, we have $\O_V(-1) \subseteq \I_{\L \cap V/V}$, and
thus
$$
h^0(\I_{\L \cap V/V}(1)) \ge h^0(\O_V) > 0.
$$
Therefore we conclude that $\reg(V) = d(V) - e$ by Proposition~\ref{prop:CU1}.
\end{eg}

We conclude the paper with a discussion of the
following classical constructions giving rise to settings
that satisfy the assumptions in Corollary~\ref{thm:main-gen-case}.

\begin{eg}\label{eg:determinant}
For $s \ge r \ge 1$, let $V \subset \P^{rs - 1}$ be defined
by the $r$-minors of an $r \times s$ matrix with general linear entries.
We can choose coordinates $x_{i,j}$, where $1\le i \le r$ and $1 \le j \le s$,
so that $\P^{rs-1} = \Proj \C[x_{i,j}]$ and $M = [x_{i,j}]$.
For every $k = 1, \dots, r-1$, let $V_k \subset \P^{rs-1}$ be the locus
where $\rk M \le k$. Then
$$
\mult_{V_k}(V) = r-k \and
\codim_{\P^{rs-1}}(V_k) = (s-k)(r-k)
$$
for every $k$, so that, in particular,
$V = V_{r-1}$ and $\Sing(V) = V_{r-2}$.

We consider the sequence of blowups $g_k\colon Y_k \to Y_{k-1}$, where
$Y_0 = \P^{rs-1}$ and $g_k$ is the blow up of $Y_{k-1}$
along the proper transform of $V_k$. An explicit computation shows
that each center of
blow up is smooth and that this operation produces a log resolution
$$
f \colon Y = Y_{r-1} \to \P^{rs-1}
$$
of $(\P^{rs-1},V)$.
Let $E_1,\dots,E_{r-1}$ be the exceptional divisors of $f$, labeled so that
$f(E_k) = V_k$. Then
$$
\ord_{E_k}(K_{Y/\P^{rs-1}}) - e \val_{E_k}(V) = (s-k)(r-k) - 1 - (s-r+1)(r-k) \ge -1
$$
for all $k$, which implies that $(\P^{rs-1},eV)$ is log canonical.
Therefore $V$ satisfies the assumptions in Corollary~\ref{thm:main-gen-case},
and hence, in particular, we recover
the well known fact that $\reg(V) \le d(V) - e + 1$.
\end{eg}

\begin{eg}\label{eg:reg-generic-proj}
Let $V$ is the image of a smooth $e$-dimensional variety $\~V$ in $\P^{2e+1}$
under a general projection.
Note that $V$ has at most a finite number of double
points and is smooth elsewhere. Moreover, if $p$ is a double point
of $V$, then the tangent cone of $V$ at $p$ (viewed as a subset
of the tangent space $T_p\P^{2e}$) is the union of two
linear spaces meeting only at the origin (and hence spanning $T_p\P^{2e}$).

This can be checked as follows. Consider the incidence sets
\begin{gather*}
I = \{(p,q,o) \in \~V\times\~V\times\P^{2e+1} \mid p \ne q, \,
o \in \ov{pq} \}, \\
I' = \{(p,q,o) \in I \mid
\dim (\Span(PT_p\~V,o) \cap \Span(PT_q\~V,o)) \ge 2 \},
\end{gather*}
where $PT_p\^V \subset \P^{2e+1}$ denotes the linear subspace of dimension $e$
tangent to $\~V$ at $p$. Note that $I$ is irreducible
and the projection to the last factor $I \to \P^{2e+1}$
is generically finite whenever dominant. On the other hand, $I'$ is a proper
closed subset of $I$. Indeed, if $(p,q)$ is general in $\~V \times \~V$,
then the linear projection $\pi_p \colon \~V \setminus \{p\} \to \P^{2e}$
has maximal rank at $q$, which means that
$PT_p\~V \cap PT_q\~V = \emptyset$, and hence
$\Span(PT_p\~V,o) \cap \Span(PT_q\~V,o) = \ov{pq}$ for every $o \in \ov{pq}$.
This implies that the projection to the last factor $I' \to \P^{2e+1}$
is not dominant.

Suppose that $p$ is a double point of $V$.
Let $h\colon Y \to \P^{2e}$ be the blow up of $\P^{2e}$ at $p$,
and let $E$ be the exceptional divisor. If $V' \subset Y$ is the proper
transform of $V$, then $V' \cap E$ is the union of two disjoint planes,
and thus $V'$ is smooth and intersects transversally $E$.
Therefore, in order to prove that $(\P^{2e},eV)$ is log canonical,
it suffices to check the discrepancy along $E$. We have
$$
\ord_E(K_{Y/\P^{2e}}) - e \val_E(V) = 2e-1 - 2e = -1,
$$
and thus we can conclude that the pair is log canonical, and hence
$V$ satisfies the assumptions in Corollary~\ref{thm:main-gen-case}, and
therefore
$$
\reg(V) \le d(V) - e + 1.
$$
Note that the tangent cone of $V$ at a double point $p$ spans the whole
tangent space of $\P^{2e}$. This implies in particular that
if $e \ge 2$ and $V$ is singular, then $V$ is not locally complete intersection,
and hence $\reg(V) \le d(V) - e$.
\end{eg}

\begin{rmk}
Still keeping the notation as in Example~\ref{eg:reg-generic-proj},
it is interesting to observe that
if $e = 2$ and we (generically) project further to a surface $\ov{V} \subset \P^3$, then
$(\P^3,\ov{V})$ is also log canonical, so it still
satisfies the hypothesis of Corollary~\ref{thm:main-gen-case}. Of course
this is irrelevant from the point of view of regularity, as in this
case $\ov{V}$ is a divisor.
By contrast, if $e \ge 3$, then taking a general
projection to an hypersurface $\ov{V} \subset \P^{e+1}$ produces a pair
$(\P^3,\ov{V})$ that in general is not log canonical, as the multiplicities
may grow exponentially with respect to $e$.
One can still ask whether there is a function $\phi(e)$,
with $e < \phi(e) < 2e$ such that a general projection
to $\P^{\phi(e)}$ gives a variety to which Corollary~\ref{thm:main-gen-case} applies.
\end{rmk}

\providecommand{\bysame}{\leavevmode \hbox \o3em
{\hrulefill}\thinspace}

\end{document}